\newcommand{\Z}{\mathbb{Z}} 
 \newcommand{\Q}{\mathbb{Q}}
\newcommand{\EE}{\EuScript {E}}
\newcommand{\OO}{\EuScript {O}}
\newcommand{\spl}{\mathrm{SL}}
\newcommand{\stab}{\mathrm{Stab}}
\newtheorem{thm}{Theorem}[section]
\newtheorem{lem}[thm]{Lemma}
\newtheorem{prop}[thm]{Proposition}
\theoremstyle{definition}
\newtheorem{defn}[thm]{Definition}
\newtheorem{rem}[thm]{Remark}
\renewcommand{\vee}{\ast}
\begin{document} 

\vspace*{-0.5cm}

\title[binary cubic forms II]
{Relations among Dirichlet series whose coefficients are
class numbers of binary cubic forms II}
\date{July 7, 2011}

\begin{abstract}
As a continuation of the authors and Wakatsuki's
previous paper \cite{sty}, we study relations among Dirichlet
series whose coefficients are class numbers of binary
cubic forms.
We show that for any integral models of the space of
binary cubic forms, the associated
Dirichlet series satisfies a simple explicit relation
to that of the dual other than the usual functional equation.
As an application, we write the
functional equations of these Dirichlet series
in self dual forms.
\end{abstract}

\author[Yasuo Ohno]{Yasuo Ohno}
\address{
Department of Mathematics, Kinki University,
Kowakae 3-4-1, Higashi-Osaka, Osaka 577-8502, Japan}
\email{ohno@math.kindai.ac.jp}
\author[Takashi Taniguchi]{Takashi Taniguchi}
\address{
Department of Mathematics, Graduate School of Science, Kobe University,
1-1, Rokkodai, Nada-ku, Kobe 657-8501, Japan}
\address{
Department of Mathematics, Princeton University,
Fine Hall, Washington Road, Princeton, NJ 08540}
\email{tani@math.kobe-u.ac.jp}
\maketitle

\vspace*{-0.5cm}

\section{Introduction}\label{sec:intro}

The theory of zeta functions for the space of binary cubic forms
was initiated by Shintani \cite{shintania} as a fine example
of zeta functions of prehomogeneous vector spaces \cite{sash}.
He introduced $4$ Dirichlet series
$\xi_{1,1}(s)$, $\xi_{1,2}(s)$, $\xi^\vee_{1,1}(s)$,
$\xi^\vee_{1,2}(s)$ whose coefficients are class numbers of
integral binary cubic forms, and established their remarkable beautiful
analytic properties. These $4$ zeta functions he introduced are
for the ``standard'' integral models,
and our purpose is to study the zeta functions
for {\em all} integral models.

Let us recall the definition of the zeta function.
Let $V_\Q$ be the space of binary cubic forms over
the rational number field $\Q$;
\[
V_\Q:=\{x(u,v)=au^3+bu^2v+cuv^2+dv^3\mid a,b,c,d\in\Q\}.
\]
We express elements of $V_\Q$ as
$x=x(u,v)=au^3+bu^2v+cuv^2+dv^3$.
We identify $V_\Q$ with $\Q^4$ via
$V_\Q\ni x\mapsto(a,b,c,d)\in\Q^4$ and write as $x=(a,b,c,d)$ also.
Let $P(x)$ denote the discriminant of $x\in V_\Q$:
\[
P(x):={\rm Disc}(x(u,v))=b^2c^2+18abcd-4ac^3-4b^3d-27a^2d^2.
\]
The group $\spl_2(\Z)$ acts on $V_\Q$ by the linear change of
variables, and $P(x)$ is invariant under the action.

We recall the classification of $\spl_2(\Z)$-invariant lattices in $V_\Q$.
We put
\begin{align*}
L_1&:=\{x\in V_\Q\mid a,b,c,d\in\Z\}=\Z^4,\\
L_2&:=\{(a,b,c,d)\in L_1\mid a+b+d,a+c+d \in 2\Z \},\\
L_3&:=\{(a,b,c,d)\in L_1\mid a+b+c,b+c+d \in 2\Z \},\\
L_4&:=\{(a,b,c,d)\in L_1\mid b+c\in 2\Z \},\\
L_5&:=\{(a,b,c,d)\in L_1\mid a,d,b+c\in 2\Z \},
\end{align*}
and
\begin{align*}
L^\vee_1&:=\{x\in V_\Q\mid a,d\in\Z, b,c\in3\Z\},\\
L^\vee_2&:=L^\vee_1\cap L_3,
\quad
L^\vee_3:=L^\vee_1\cap L_2,
\quad
L^\vee_4:=L^\vee_1\cap L_5,
\quad
L^\vee_5:=L^\vee_1\cap L_4.
\end{align*}
We have $L_4\supset L_2$, $L_4\supset L_5$, $L_3\supset L_5$
and similar relations for $L^\vee_i$'s.
\begin{center}
\unitlength 0.1in
\begin{picture}( 11.7200,  8.8200)(  2.1500,-13.5700)
%
\special{pn 8}%
\special{pa 988 558}%
\special{pa 588 958}%
\special{pa 988 1358}%
\special{pa 1388 958}%
\special{pa 988 558}%
\special{pa 988 558}%
\special{pa 988 558}%
\special{fp}%
%
\special{pn 8}%
\special{pa 1188 1158}%
\special{pa 788 758}%
\special{pa 788 758}%
\special{pa 788 758}%
\special{fp}%
\put(7.9000,-5.6000){\makebox(0,0){$L_1$}}%
%
\special{pn 8}%
\special{sh 1}%
\special{ar 988 558 10 10 0  6.28318530717959E+0000}%
\special{sh 1}%
\special{ar 788 758 10 10 0  6.28318530717959E+0000}%
\special{sh 1}%
\special{ar 788 758 10 10 0  6.28318530717959E+0000}%
%
\special{pn 8}%
\special{sh 1}%
\special{ar 588 958 10 10 0  6.28318530717959E+0000}%
\special{sh 1}%
\special{ar 988 1358 10 10 0  6.28318530717959E+0000}%
\special{sh 1}%
\special{ar 1188 1158 10 10 0  6.28318530717959E+0000}%
\special{sh 1}%
\special{ar 1388 958 10 10 0  6.28318530717959E+0000}%
\special{sh 1}%
\special{ar 1388 958 10 10 0  6.28318530717959E+0000}%
\put(5.9000,-7.6000){\makebox(0,0){$L_4$}}%
\put(4.4000,-9.6000){\makebox(0,0){$L_2$}}%
\put(15.4000,-9.6000){\makebox(0,0){$L_3$}}%
\put(13.9000,-11.6000){\makebox(0,0){$L_5$}}%
\put(11.9000,-13.6000){\makebox(0,0){$2L_1$}}%
\end{picture}%

		\qquad\qquad
\unitlength 0.1in
\begin{picture}( 13.9500,  8.8200)( -1.2000,-13.5700)
%
\special{pn 8}%
\special{pa 876 558}%
\special{pa 476 958}%
\special{pa 876 1358}%
\special{pa 1276 958}%
\special{pa 876 558}%
\special{pa 876 558}%
\special{pa 876 558}%
\special{fp}%
%
\special{pn 8}%
\special{pa 1076 1158}%
\special{pa 676 758}%
\special{pa 676 758}%
\special{pa 676 758}%
\special{fp}%
\put(6.8000,-5.6000){\makebox(0,0){$L^\vee_1$}}%
%
\special{pn 8}%
\special{sh 1}%
\special{ar 876 558 10 10 0  6.28318530717959E+0000}%
\special{sh 1}%
\special{ar 676 758 10 10 0  6.28318530717959E+0000}%
\special{sh 1}%
\special{ar 676 758 10 10 0  6.28318530717959E+0000}%
%
\special{pn 8}%
\special{sh 1}%
\special{ar 476 958 10 10 0  6.28318530717959E+0000}%
\special{sh 1}%
\special{ar 876 1358 10 10 0  6.28318530717959E+0000}%
\special{sh 1}%
\special{ar 1076 1158 10 10 0  6.28318530717959E+0000}%
\special{sh 1}%
\special{ar 1276 958 10 10 0  6.28318530717959E+0000}%
\special{sh 1}%
\special{ar 1276 958 10 10 0  6.28318530717959E+0000}%
\put(4.8000,-7.6000){\makebox(0,0){$L^\vee_5$}}%
\put(3.3000,-9.6000){\makebox(0,0){$L^\vee_3$}}%
\put(14.3000,-9.6000){\makebox(0,0){$L^\vee_2$}}%
\put(12.8000,-11.6000){\makebox(0,0){$L^\vee_4$}}%
\put(10.8000,-13.6000){\makebox(0,0){$2L^\vee_1$}}%
\end{picture}%

\end{center}
In the previous paper \cite{sty}
the authors and Wakatsuki showed that up to $\Q^\times$-multiplication,
this is a complete list of $\spl_2(\Z)$-invariant lattices in $V_\Q$.
Hence there are $10$ different integral models of $V_\Q$.
The notation $L^\vee_i$ is because it is
isomorphic to the contragradient representation
${\rm Hom}(L_i,\Z)$ of $\spl_2(\Z)$.

The zeta functions are defined as follows.
\begin{defn}
For $1\leq i\leq 5$ and $1\leq j\leq2$, we define
\begin{align*}
\xi_{i,j}(s):=\sum_{\substack{x\in{\spl_2(\Z)}\backslash L_i\\(-1)^{j-1}P(x)>0}}\frac{|\stab(x)|^{-1}}{|P(x)|^s},
\qquad
\xi_{i,j}^\vee(s):=\sum_{\substack{x\in{\spl_2(\Z)}\backslash L_i^\vee\\(-1)^{j-1}P(x)>0}}
\frac{|\stab(x)|^{-1}}{|P(x)/27|^s},
\end{align*}
and call them the {\em zeta functions}
associated with $L_i$ or $L_i^\vee$.
Here $|\stab(x)|$ denote the number of stabilizers of $x$ in $\spl_2(\Z)$.
\end{defn}

Note that $|\stab(x)|$ is either $1$ or $3$ and that
for $x\in L^\vee_i$, $P(x)$ is a multiple of $27$.

Shintani \cite{shintania} showed that for $i=1$,
the standard integral models $L_1$ and $L_1^\vee$,
these zeta functions have holomorphic continuations
to the whole complex plane except for simple poles at $s=1,5/6$,
and satisfy a functional equation.
He also computed the residues explicitly.
In \cite{sty}, we proved similar analytic properties
for $2\leq i\leq 5$.

However, despite of Shintani's extensive study of
$\xi_{1,j}(s)$ and $\xi^\vee_{1,j}(s)$,
one other significant property was remain unrevealed
until 1990's.
The following identity
was conjectured by the first author
\cite{ohno} and proved by Nakagawa \cite{nakagawa}.
\begin{thm}[Conjectured in \cite{ohno}, proved in \cite{nakagawa}]
\label{thm:SNR_nakagawa}
We have
\begin{equation}\label{eq:SNR_nakagawa}
\xi^\vee_{1,1}(s)=\xi_{1,2}(s),
\qquad
\xi^\vee_{1,2}(s)=3\xi_{1,1}(s).
\end{equation}
\end{thm}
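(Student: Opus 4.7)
The plan is to translate both sides of \eqref{eq:SNR_nakagawa} into class-number generating functions via the Delone--Faddeev correspondence, and then to deduce the identity from a Scholz-type reflection theorem relating $3$-torsion in class groups of quadratic fields of discriminants $D$ and $-3D$.

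First, I would use the Delone--Faddeev correspondence to identify $\spl_2(\Z)$-orbits on $L_1$ of nonzero discriminant with (oriented) isomorphism classes of cubic rings, under which the weight $|\stab(x)|^{-1}$ matches $|\mathrm{Aut}(R)|^{-1}$ (up to $\pm 1$). This expresses the $n$-th coefficient of $\xi_{1,j}(s)$ as a weighted count of cubic rings $R$ with $\mathrm{Disc}(R)=(-1)^{j-1}n$. A form in $L_1^\vee$ can be written $au^3+3bu^2v+3cuv^2+dv^3$ with discriminant equal to $27$ times an integer, and a refinement of the Delone--Faddeev parametrisation shows that $\spl_2(\Z)$-orbits on $L_1^\vee$ are counted by cubic rings together with a $3$-torsion element in the class group of the quadratic resolvent (equivalently, an unramified cyclic cubic extension of that resolvent).

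Second, I would split each Dirichlet series into a maximal-order part (cubic fields) and a non-maximal part, obtaining an Euler-product-type decomposition. For cubic fields, class field theory identifies the $n$-th coefficient of $\xi^\vee_{1,j}(s)$ with a count of unramified cyclic cubic extensions of the quadratic field $\Q(\sqrt{(-1)^{j-1}n})$, while the $n$-th coefficient of $\xi_{1,j}(s)$ is a count of cubic fields of discriminant $(-1)^{j-1}n$. The identity \eqref{eq:SNR_nakagawa} at the maximal-order level then reduces to a Scholz-type reflection principle comparing the $3$-ranks of $\mathrm{Cl}(\Q(\sqrt{D}))$ and $\mathrm{Cl}(\Q(\sqrt{-3D}))$; the asymmetric factor of $3$ in the second identity of \eqref{eq:SNR_nakagawa} reflects the contribution from a canonical cube root of unity that is present on one side and absent on the other.

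Finally, the non-maximal contributions must also match. I would perform a local analysis: at each prime $p$, both sides can be expressed as an Euler factor times the maximal-order count, and these Euler factors match at $p\neq 3$ by an explicit calculation. At $p=3$, the rescaling by $27$ built into the definition of $\xi^\vee_{i,j}(s)$ is precisely what compensates for the extra $3$-adic ramification inherent in the $L_1^\vee$-side. The main obstacle is this local analysis at $p=3$, where the subtle interaction between the dual lattice, the $3$-torsion structure of class groups, and the discriminant rescaling must be handled carefully, together with the archimedean bookkeeping that interchanges totally real cubic fields on one side with complex cubic fields on the other. Combining the maximal- and non-maximal-order pieces then yields \eqref{eq:SNR_nakagawa}.
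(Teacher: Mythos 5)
The paper does not actually prove Theorem~\ref{thm:SNR_nakagawa}: it is quoted as a known result, conjectured in \cite{ohno} and proved in \cite{nakagawa}, and the introduction explicitly records that no elementary proof is known and that Nakagawa's argument rests on a sophisticated use of class field theory. Your outline does point at the same circle of ideas that the paper attributes to Nakagawa --- the Delone--Faddeev parametrization of ${\rm GL}_2(\Z)\backslash L_1$ by cubic rings and the interpretation of $\spl_2(\Z)\backslash L_1^\vee$ via $3$-torsion in ideal class groups of quadratic rings --- so as a description of the strategy it is aimed in the right direction.

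As a proof, however, it has genuine gaps, and they sit exactly where the theorem is hard. First, the Scholz reflection theorem gives only inequalities between $3$-ranks of fundamental discriminants (for $m>0$, $r_3(\Q(\sqrt{m}))\le r_3(\Q(\sqrt{-3m}))\le r_3(\Q(\sqrt{m}))+1$), whereas \eqref{eq:SNR_nakagawa} is an exact identity of weighted orbit counts for \emph{every} individual discriminant, fundamental or not; an equality of Dirichlet coefficients cannot be deduced from a rank inequality, and the factor $3$ in the second identity is asserted to come from ``a canonical cube root of unity'' without any argument. What Nakagawa actually does is compute, via class field theory over each quadratic resolvent, the exact number of cubic extensions with prescribed ramification, and the identity emerges from a delicate comparison of ray class group and unit group indices between $\Q(\sqrt{D})$ and $\Q(\sqrt{-3D})$; that computation, together with the matching of non-maximal rings and the local analysis at $2$, $3$ and the archimedean place which you explicitly defer, is essentially the entire content of the theorem. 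Second, the parametrization you invoke for the dual lattice is misstated: $\spl_2(\Z)$-orbits on $L_1^\vee$ correspond (essentially) to pairs consisting of a quadratic ring and a $3$-torsion ideal class, not to cubic rings equipped with a $3$-torsion element of the resolvent's class group, and the stabilizer weights $|\stab(x)|^{-1}$ must be matched exactly on both sides since the claim is an identity rather than an asymptotic. In short, the plan names the right ingredients, but every step at which the result could fail is left open.
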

Although \eqref{eq:SNR_nakagawa} is quite simple,
no elementary proof of this theorem is known to the present.
In fact, Nakagawa proved them as a consequence
of the sophisticated use of class field theory.
As we will describe in Theorem \ref{thm:fe_SDFp},
this theorem has an
important application to the functional equation.
Hence it is natural to ask whether there exist
similar relations of the zeta functions for other integral models.
We will give the affirmative answer to this problem.

To state our results, we find it convenient to put
\[
\xi_i(s)
:=
\begin{pmatrix}\xi_{i,1}(s)\\\xi_{i,2}(s)\end{pmatrix},
\quad
\xi_i^\vee(s)
:=
\begin{pmatrix}\xi_{i,1}^\vee(s)\\\xi_{i,2}^\vee(s)\end{pmatrix},
\quad
A:=\begin{pmatrix} 0&1\\3&0\end{pmatrix}.
\]
Then \eqref{eq:SNR_nakagawa} is written as
$\xi_1^\vee(s)=A\cdot\xi_1(s)$.
For $i=2,3$,
the authors and Wakatsuki proved the following
in the previous paper \cite{sty}.
\begin{thm}[\cite{sty}]
\label{thm:SNR_sty}
We have
\begin{equation}\label{eq:SNR_sty}
\begin{split}
\xi_2^\vee(s)&=A\cdot\xi_2(s),\\
\xi_3^\vee(s)&=A\cdot\xi_3(s).
\end{split}
\end{equation}
\end{thm}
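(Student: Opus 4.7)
The plan is to bootstrap from the Ohno--Nakagawa identity \eqref{eq:SNR_nakagawa} via an inclusion--exclusion on the lattice Hasse diagram, refined at the prime $2$. Since each $L_i$ sits between $2L_1$ and $L_1$ (with $[L_1:L_i]=4$ for $i=2,3$), and similarly for the duals, the zeta functions $\xi_i$ and $\xi_i^\vee$ decompose as partial sums of $\xi_1$ and $\xi_1^\vee$ over $\spl_2(\Z)$-stable unions of cosets in $L_1/2L_1$ and $L_1^\vee/2L_1^\vee$. Accordingly, for each $\spl_2(\F_2)$-orbit $O\subset L_1/2L_1$, set
\[
\xi_{1,j}(s;O) := \sum_{\substack{x\in \spl_2(\Z)\backslash L_1\\ x\bmod 2\in O,\ (-1)^{j-1}P(x)>0}} \frac{|\stab(x)|^{-1}}{|P(x)|^s},
\]
assemble the $\xi_{1,j}(s;O)$ into a column vector $\xi_1(s;O)$, and define $\xi_1^\vee(s;O^\vee)$ analogously for $O^\vee\subset L_1^\vee/2L_1^\vee$. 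Then $\xi_i(s)=\sum_O \xi_1(s;O)$, where $O$ runs over the orbits inside $L_i/2L_1$, and likewise for $\xi_i^\vee(s)$.

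The central step is to promote \eqref{eq:SNR_nakagawa} to a congruence-refined identity
\[
\xi_1^\vee(s;O^\vee)\;=\;A\cdot\xi_1(s;O)
\]
for orbits $(O,O^\vee)$ matched by the natural $\spl_2(\F_2)$-equivariant bijection between $L_1/2L_1$ and $L_1^\vee/2L_1^\vee$ coming from the $2$-adic duality pairing. Granting this, Theorem~\ref{thm:SNR_sty} follows by summing the refined identity over the orbits of $L_i/2L_1$ on the left and of $L_i^\vee/2L_1^\vee$ on the right, provided the matching bijection sends $L_i/2L_1$ bijectively onto $L_i^\vee/2L_1^\vee$. The latter is a finite and explicit check; the descriptions $L_i^\vee=L_1^\vee\cap L_{\sigma(i)}$ with $\sigma$ the involution swapping $(L_2,L_3)$ and $(L_4,L_5)$ make the required matching essentially a matter of unwinding congruences mod $2$.

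The heart of the argument is therefore the refined Ohno--Nakagawa identity, which is where I expect the main obstacle to lie. Nakagawa's proof \cite{nakagawa} identifies integral binary cubic forms of discriminant $D$ with certain ideal classes in cubic rings and produces \eqref{eq:SNR_nakagawa} by reflection for these classes. I would re-run that argument while tracking the mod-$2$ reduction of the form throughout, so that the same class-field-theoretic reflection now matches partial sums over $\spl_2(\F_2)$-orbits on the two sides. Equivalently, the adelic formulation of Datskovsky--Wright splits \eqref{eq:SNR_nakagawa} into global and local factors, and the refined identity is exactly the local identity at $p=2$ tested against the characteristic functions of $O$ and $O^\vee$; since $L_1/2L_1$ has only $16$ elements, this reduces to a concrete, in principle mechanical, $2$-adic computation. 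Once the refined identity is in place, the passage to Theorem~\ref{thm:SNR_sty} is purely combinatorial.
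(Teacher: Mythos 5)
Your reduction to Theorem \ref{thm:SNR_nakagawa} is the right instinct (it is also what \cite{sty} does), but the specific refinement you propose does not work, for two concrete reasons. First, the combinatorial matching you defer as ``a finite and explicit check'' fails. Since $L_1^\vee\otimes\Z_2=L_1\otimes\Z_2$, the natural identification of $L_1^\vee/2L_1^\vee$ with $L_1/2L_1$ is the identity on coordinates mod $2$, and under it $L_2^\vee/2L_1^\vee$ corresponds to $L_3/2L_1$ (because $L_2^\vee=L_1^\vee\cap L_3$), not to $L_2/2L_1$. Worse, no $\spl_2(\F_2)$-equivariant bijection can repair this: $L_2/2L_1=\{0000\}\sqcup\mathrm{Orb}(0111)$ has orbit sizes $\{1,3\}$ while $L_3/2L_1=\{0000\}\sqcup\{0110\}\sqcup\{1011,1101\}$ has orbit sizes $\{1,1,2\}$, and an equivariant bijection preserves orbit sizes. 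Second, and more fundamentally, the orbit-refined identity $\xi_1^\vee(s;O^\vee)=A\cdot\xi_1(s;O)$ is \emph{false}: any equivariant bijection must match the unique nonzero fixed point $(0,1,1,0)$ on each side, and summing your identity over $\{0000,0110\}$ would give $\xi_4^\vee=A\cdot\xi_5$ (since $L_5/2L_1=\{0000,0110\}$ and $L_4^\vee=L_1^\vee\cap L_5$). Combining the first two formulas of Theorem \ref{thm:KSR} with the discriminant-refined Nakagawa identity $(\xi_1^\vee)^{\cong-20(32)}=A\cdot\xi_1^{\equiv20(32)}$, the relation $\xi_4^\vee=A\cdot\xi_5$ is equivalent to $\xi_1^{\equiv4(32)}=\xi_1^{\equiv20(32)}$, which is absurd (the form $(0,1,2,0)$ has discriminant $4$, so the coefficient of $4^{-s}$ is nonzero on the left and zero on the right). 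So the mod-$2$ class of the \emph{form} is too fine an invariant: Ohno--Nakagawa does not refine along it.

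What does refine for free is the congruence class of the \emph{discriminant}: since \eqref{eq:SNR_nakagawa} is an equality of Dirichlet series, it holds coefficient by coefficient, giving $(\xi_1^\vee)^{\cong-l(N)}=A\cdot\xi_1^{\equiv l(N)}$ for every $l,N$ with no further input (the sign flip because $A$ exchanges positive and negative discriminants). The actual content of the proof in \cite{sty}, quoted in the Remark after Theorem \ref{thm:KSR}, is that membership in $L_2$ or $L_3$ (away from $2L_1$) is detected by the discriminant mod $8$: one proves $\xi_2=\xi_1^{\equiv5(8)}+2^{-4s}\xi_1$ and $\xi_3=\xi_1^{\equiv1(8)}+2^{-4s}\xi_1$, together with the dual statements, and then the theorem follows from the free refinement above. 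If you want to salvage your plan, replace the partition of $L_1$ by cosets of $2L_1$ with the partition by residues of $P(x)$ modulo $8$, and prove the elementary congruence lemmas identifying $L_2\setminus 2L_1$ and $L_3\setminus 2L_1$ with $L_1^{\equiv5(8)}$ and $L_1^{\equiv1(8)}$; no re-run of Nakagawa's class-field-theoretic argument is needed.
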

On the other hand, for $i=4,5$,
$\xi_i^\vee(s)$ and $A\cdot\xi_i(s)$ do not coincide.
These discrepancies themselves are not surprising since the
indices $[L_1:L_i]$ and $[L_1^\ast:L_i^\ast]$ do not coincide.
However, in view of \eqref{eq:SNR_nakagawa}, \eqref{eq:SNR_sty}
for $i=1,2,3$, one may believe that some corresponding
formulas should exist for $i=4,5$.

Indeed, we find such formulas in certain {\em linear combinations}
of the zeta functions. The following is a main result of this paper.
\begin{thm}[Main Theorem]\label{thm:SNR_ty}
We put
\begin{align*}
\theta(s)&
	:=\xi_1(s)-2\xi_3(s)-\xi_4(s)+4\xi_5(s),\\
\eta(s)&
	:=2^{2s}\left(\xi_4(s)-\xi_2(s)-\xi_5(s)+2^{1-4s}\xi_1(s)\right),\\
\theta^\vee(s)&
	:=2^{2s}\left(\xi_5^\vee(s)-\xi_3^\vee(s)-\xi_4^\vee(s)+2^{1-4s}\xi_1^\vee(s)\right),\\
\eta^\vee(s)&:=	\xi_1^\vee(s)-2\xi_2^\vee(s)-\xi_5^\vee(s)+4\xi_4^\vee(s).
\end{align*}
Then
\begin{equation}\label{eq:SNR_ty}
\begin{split}
\theta^\vee(s)&=	A\cdot\theta(s),\\
\eta^\vee(s)&=		A\cdot\eta(s).
\end{split}
\end{equation}
\end{thm}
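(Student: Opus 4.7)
The plan is to reduce the Main Theorem to the known identities of Theorems~\ref{thm:SNR_nakagawa} and~\ref{thm:SNR_sty}, supplemented by a local analysis at $p=2$ that accounts for the asymmetric indices $[L_1:L_4]=2$ and $[L_1:L_5]=8$.

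First, I would reformulate each zeta function as a weighted zeta function on the standard lattice. For any function $\phi$ on $L_1/2L_1\cong(\F_2)^4$, set
\[
Z(s,\phi):=\sum_{x\in \spl_2(\Z)\backslash L_1}\phi(x\bmod 2L_1)\,|\stab(x)|^{-1}|P(x)|^{-s},
\]
and analogously $Z^\vee(s,\phi)$ for $L_1^\vee$ normalized with $|P(x)/27|^{-s}$. Since $3\in\Z_2^\times$, the inclusion $L_1^\vee\subset L_1$ induces an isomorphism $L_1^\vee/2L_1^\vee\cong L_1/2L_1$, under which $H_i^\vee:=L_i^\vee/2L_1^\vee$ corresponds to $H_{\tau(i)}:=L_{\tau(i)}/2L_1$ with $\tau=(2\,3)(4\,5)$. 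Hence $\xi_i(s)=Z(s,\chi_{H_i})$ and $\xi_i^\vee(s)=Z^\vee(s,\chi_{H_{\tau(i)}})$, and Theorems~\ref{thm:SNR_nakagawa} and~\ref{thm:SNR_sty} read $Z^\vee(s,\chi_{H_{\tau(i)}})=A\cdot Z(s,\chi_{H_i})$ for $i=1,2,3$.

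Next, I would decompose the space of $\spl_2(\F_2)$-invariant functions on $V(\F_2)$ in the basis of characteristic functions of the six orbits of $\spl_2(\F_2)\cong S_3$: namely $\{0\}$, the three cubes $u^3,v^3,(u+v)^3$, the six forms $l^2 m$ with $l,m$ distinct nonzero lines, the three forms $l(u^2+uv+v^2)$, the singleton $\{uv(u+v)\}$, and the two irreducible cubics. Expressing each $\chi_{H_i}$ in this basis and adjoining the trivial rescaling $Z(s,\chi_{\{0\}})=2^{-4s}\xi_1(s)$ (together with its $\vee$-analogue) to the three input identities yields four linear relations among the orbit-level zeta functions, pinning down the duality map $\phi\mapsto\phi^\vee$ (defined by $Z^\vee(s,\phi^\vee)=A\cdot Z(s,\phi)$) on a four-dimensional subspace of the six-dimensional invariant function space.

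The remaining input, which determines the duality on the two-dimensional complement and thereby proves the Main Theorem, comes from a local computation at $p=2$. Because $L_1\otimes\Z_2$ is self-dual under the canonical $\spl_2$-invariant pairing on $V$ (again since $3\in\Z_2^\times$), the Fourier transforms of $\chi_{L_4\otimes\Z_2}$ and $\chi_{L_5\otimes\Z_2}$ admit explicit expressions, and the factors $2^{\pm 2s}$ appearing in $\theta$ and $\eta$ arise naturally from the indices $[L_1:L_i]$. Inserting these Fourier transforms into the adelic integral representation of $Z$ and $Z^\vee$ produces the two missing linear relations, and finite-dimensional linear algebra then confirms both identities of the Main Theorem. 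The main obstacle is carrying out this $2$-adic local computation and bookkeeping the $2^{\pm 2s}$ scaling factors: individually the identities $\xi_i^\vee=A\xi_i$ genuinely fail for $i=4,5$ because $[L_1:L_i]\ne[L_1^\vee:L_i^\vee]$, and the specific coefficients in $\theta$ and $\eta$ are designed precisely so that these index imbalances cancel, leaving the clean $s$-dependent identities asserted.
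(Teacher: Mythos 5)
Your setup---recasting each $\xi_i$ as a weighted sum $Z(s,\phi)$ over $L_1$ with $\phi$ an $\spl_2(\F_2)$-invariant function on $L_1/2L_1$, identifying the six orbits, and observing that the known identities for $i=1,2,3$ together with the rescaling $Z(s,\chi_{\{0\}})=2^{-4s}\xi_1(s)$ pin down the duality only on a four-dimensional subspace---is a faithful reformulation of the paper's Proposition \ref{prop:mod2zeta} and correctly locates where two further inputs are needed. The gap is in where you propose to get them. You assert that a $2$-adic Fourier computation of $\chi_{L_4\otimes\Z_2}$ and $\chi_{L_5\otimes\Z_2}$, inserted into the adelic integral representation, ``produces the two missing linear relations.'' It cannot: local Fourier analysis and the adelic zeta integral are exactly the machinery that yields the \emph{functional equation} relating $\xi_i(1-s)$ to $\xi_i^\vee(s)$ (as in Shintani and Datskovsky--Wright), not identities between $\xi^\vee(s)$ and $\xi(s)$ at the \emph{same} $s$. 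The same-$s$ relations are of Ohno--Nakagawa type; even the base case $\xi_1^\vee=A\xi_1$ has no known local or harmonic-analytic proof and requires Nakagawa's global class field theory argument. If a local computation at $p=2$ could supply same-$s$ relations in the two missing directions, the identical mechanism would prove Theorem \ref{thm:SNR_nakagawa} itself locally, which is precisely what is not available. So the step that is supposed to close the argument is unsupported and, as described, would fail.

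The paper's actual source of the two extra relations is different in kind: since $\xi_1^\vee=A\xi_1$ is an identity of Dirichlet series, it restricts to any $\spl_2(\Z)$-invariant condition on the discriminant, giving $(\xi_1^\vee)^{\cong-l(N)}=A\,\xi_1^{\equiv l(N)}$. The discriminant classes $P\equiv 4,20\pmod{32}$ are \emph{not} functions of the coefficients mod $2$, so they carry information beyond your six-dimensional space; the nontrivial work (Proposition \ref{prop:FPE} and Theorem \ref{thm:KSR}) is to show, via induction from $\Gamma_0(2)$-invariant subsets and rescalings by $\operatorname{diag}(1,1/2)$, that the partial zeta functions of these mod-$32$ discriminant classes are nonetheless expressible as linear combinations of the $\xi_i$ with coefficients in $2^{-2s},2^{-4s}$. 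Those two expressions, fed through Nakagawa's identity, are the missing relations. Incidentally, the $2^{2s}$ prefactors in $\theta^\vee$ and $\eta$ show that the duality is not a constant-coefficient linear map on your six-dimensional space of invariant functions, which is another sign that the purely mod-$2$ framework cannot be self-contained.
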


We now give an application of these identities to the functional equations.
We put
\begin{align*}
\Delta_+(s)&:=
\left(\frac{2^43^3}{\pi^4}\right)^{s/2}
		\Gamma\left(\frac s2\right)\Gamma\left(\frac s2+\frac12\right)
		\Gamma\left(\frac{s}{2}-\frac1{12}\right)
		\Gamma\left(\frac{s}{2}+\frac1{12}\right),\\
\Delta_-(s)&:=
\left(\frac{2^43^3}{\pi^4}\right)^{s/2}
		\Gamma\left(\frac s2\right)\Gamma\left(\frac s2+\frac12\right)
		\Gamma\left(\frac{s}{2}+\frac5{12}\right)
		\Gamma\left(\frac{s}{2}+\frac7{12}\right).
\end{align*}
Then by plugging
\eqref{eq:SNR_nakagawa}, \eqref{eq:SNR_sty}
into the functional equations,
the followings were obtained:
\begin{thm}[\cite{ohno}, \cite{nakagawa}, \cite{sty}]\label{thm:fe_SDFp}
Let $i=1,2,3$.
For each sign, we put
\[
\xi_{i,\pm}(s):=\sqrt3\xi_{i,1}(s)\pm\xi_{i,2}(s).
\]
Let $a_1=0, a_2=a_3=2$.
Then they satisfy the functional equations
\begin{equation}\label{eq:fe_SDFp}
2^{a_is}\Delta_\pm(s)\xi_{i,\pm}(s)=
2^{a_i(1-s)}\Delta_\pm(1-s)\xi_{i,\pm}(1-s).
\end{equation}
\end{thm}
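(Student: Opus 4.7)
The plan is to combine the matrix Shintani-type functional equation for each lattice (due to Shintani for $i=1$ and extended to $i=2,3$ in \cite{sty}) with the relations $\xi_i^\vee(s)=A\cdot\xi_i(s)$ of Theorems \ref{thm:SNR_nakagawa} and \ref{thm:SNR_sty}, and then to diagonalize using the eigenstructure shared by $A$ and the trigonometric matrix appearing in the functional equation.

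First, I would write the matrix functional equation as
$$\xi_i^\vee(1-s)=C_i(s)\,M(s)\,\xi_i(s),\qquad M(s):=\begin{pmatrix}\sin 2\pi s & \sin\pi s\\ 3\sin\pi s & \sin 2\pi s\end{pmatrix},$$
where $C_i(s)$ is a scalar built from Gamma factors and powers of $\pi$, $2$, $3$; the discrepancy $C_i(s)/C_1(s)$ is a power of $2$ encoding the index $[L_1:L_i]$. Substituting $\xi_i^\vee(1-s)=A\cdot\xi_i(1-s)$ on the left turns this into a functional equation in $\xi_i$ alone. The row vectors $(\sqrt 3,\pm 1)$ are simultaneously left eigenvectors of $A$, with eigenvalues $\pm\sqrt 3$, and of $M(s)$, with eigenvalues $\sin 2\pi s\pm\sqrt 3\sin\pi s$: a direct check gives
$$(\sqrt 3,\pm 1)M(s)=(\sin 2\pi s\pm\sqrt 3\sin\pi s)(\sqrt 3,\pm 1).$$
Applying $(\sqrt 3,\pm 1)$ to both sides therefore decouples the system and yields scalar functional equations
$$\xi_{i,\pm}(1-s)=\pm\frac{C_i(s)}{\sqrt 3}\bigl(\sin 2\pi s\pm\sqrt 3\sin\pi s\bigr)\,\xi_{i,\pm}(s).$$

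Next, I would bring these scalar equations into self-dual form. Starting from $2\cos\pi s\pm\sqrt 3=2(\cos\pi s\pm\cos\tfrac\pi6)$ and sum-to-product, one obtains
\begin{align*}
\sin 2\pi s+\sqrt 3\sin\pi s&=8\sin\tfrac{\pi s}{2}\cos\tfrac{\pi s}{2}\cos\!\bigl(\tfrac{\pi s}{2}+\tfrac{\pi}{12}\bigr)\cos\!\bigl(\tfrac{\pi s}{2}-\tfrac{\pi}{12}\bigr),\\
\sin 2\pi s-\sqrt 3\sin\pi s&=-8\sin\tfrac{\pi s}{2}\cos\tfrac{\pi s}{2}\sin\!\bigl(\tfrac{\pi s}{2}+\tfrac{\pi}{12}\bigr)\sin\!\bigl(\tfrac{\pi s}{2}-\tfrac{\pi}{12}\bigr).
\end{align*}
The Legendre duplication formula expands the Gamma factor $\Gamma(s)^2\Gamma(s-\tfrac16)\Gamma(s+\tfrac16)$ inside $C_i(s)$ into eight $\Gamma(s/2+\cdot)$ values at the shifts $\{0,\tfrac12,\pm\tfrac1{12},\tfrac5{12},\tfrac7{12}\}$. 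The reflections $\sin(\pi z)=\pi/(\Gamma(z)\Gamma(1-z))$ and $\cos(\pi z)=\pi/(\Gamma(\tfrac12+z)\Gamma(\tfrac12-z))$ applied to the four trigonometric factors above introduce the matching Gamma factors at the reflected arguments, along with new factors at $s/2+\tfrac5{12},\tfrac7{12}$ in the $+$ case and at $s/2\pm\tfrac1{12}$ in the $-$ case. The latter cancel exactly those from the duplication, leaving precisely the shifts $0,\tfrac12,\pm\tfrac1{12}$ of $\Delta_+$ in the $+$ case and $0,\tfrac12,\tfrac5{12},\tfrac7{12}$ of $\Delta_-$ in the $-$ case. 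Tracking the residual constants converts the scalar equation into the claimed self-dual form, with the prefactor $2^{a_is}$ supplied by the index-dependent power of $2$ in $C_i(s)/C_1(s)$.

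The main obstacle is the Gamma-function bookkeeping in the second stage. The trigonometric factorizations make the structure of the answer transparent, but the delicate cancellation among the eight shifted Gammas, together with the correct accounting of all powers of $\pi$, $2$, and $3$ arising from duplication, reflection, and the constants in $\Delta_\pm$, requires careful verification; the $\sqrt 3$ produced by projecting with $(\sqrt 3,\pm 1)$ must absorb the $3^{1/2}$ discrepancy between the $3^{3s-1}$ in $C_i(s)$ and the $3^{3s-3/2}$ in $\Delta_\pm(s)/\Delta_\pm(1-s)$. For $i=2,3$ one also needs the index correction $[L_1:L_i]=4$ in $C_i(s)$, which follows from the functional equations established in \cite{sty}.
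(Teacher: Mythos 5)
Your proposal is correct and follows essentially the same route as the paper: substitute the relation $\xi_i^\vee(s)=A\cdot\xi_i(s)$ into the matrix functional equation and project onto the common left eigenvectors $(\sqrt3,\pm1)$ of $A$ and the trigonometric matrix, which is exactly the content of the identity $\Delta(1-s)TM(s)A=\Delta(s)T$ that the paper invokes (citing Datskovsky--Wright). The only difference is that you re-derive that symmetrization identity by hand via the sum-to-product factorizations and the duplication/reflection formulas, where the paper simply cites it; your bookkeeping of the eight shifted Gamma factors and of the powers of $2$, $3$, $\pi$ is structurally right and does close up as claimed.
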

Similarly, as a consequence of \eqref{eq:SNR_ty},
we have the following.
\begin{thm}[Corollary to Theorem \ref{thm:SNR_ty}]\label{thm:fe_SDF}
We put
\[
\theta_\pm(s):=\sqrt3\theta_1(s)\pm\theta_2(s)
\quad
\text{and}
\quad
\eta_\pm(s):=\sqrt3\eta_1(s)\pm\eta_2(s),
\]
where
$\theta(s)=(\theta_1(s),\theta_2(s))$ and
$\eta(s)=(\eta_1(s),\eta_2(s))$.
Then
\begin{equation}\label{eq:fe_SDF}
\begin{split}
2^{s}\Delta_\pm(s)\theta_{\pm}(s)&=
2^{1-s}\Delta_\pm(1-s)\theta_{\pm}(1-s),\\
2^{s}\Delta_\pm(s)\eta_{\pm}(s)&=
2^{1-s}\Delta_\pm(1-s)\eta_{\pm}(1-s).
\end{split}
\end{equation}
\end{thm}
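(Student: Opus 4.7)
The plan is to deduce Theorem \ref{thm:fe_SDF} from the main theorem \ref{thm:SNR_ty} by exactly the template that produced Theorem \ref{thm:fe_SDFp} from Theorems \ref{thm:SNR_nakagawa} and \ref{thm:SNR_sty}: combine \eqref{eq:SNR_ty} with the Shintani-type functional equations for the individual $\xi_i,\xi_i^\vee$ established in \cite{shintania} for $i=1$ and \cite{sty} for $2\le i\le 5$, and then diagonalize using the common eigenvectors of $A$ and of the Shintani matrix. Each of these individual functional equations expresses $\xi_i^\vee(1-s)$ as a single universal Shintani matrix $M(s)$ (assembled from the gamma factors that go into $\Delta_\pm$) applied to $\xi_i(s)$, multiplied by a lattice-dependent scalar that is a power of $2$ with exponent linear in $s$. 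The crucial structural fact, which is already the content of Theorem \ref{thm:fe_SDFp} specialized to $i=1$, is that the row vectors $(\sqrt{3},\pm 1)$ are left eigenvectors of $M(s)$ with eigenvalues proportional to $\Delta_\pm(1-s)/\Delta_\pm(s)$; they are of course also left eigenvectors of $A$ with eigenvalues $\pm\sqrt{3}$.

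Next I would form the linear combinations appearing in Theorem \ref{thm:SNR_ty}. The specific integer coefficients $1,-2,-1,4$ together with the prefactors $2^{2s}, 2^{1-4s}$ built into the definitions of $\theta,\theta^\vee,\eta,\eta^\vee$ are tuned precisely so that, when the four individual Shintani functional equations are added with these weights, the four lattice-dependent scalars collapse to a single common power of $2$. This will yield clean matrix functional equations of the shape
\[
\theta^\vee(1-s) = c(s)\,M(s)\,\theta(s),\qquad \eta^\vee(1-s) = c(s)\,M(s)\,\eta(s),
\]
for a common scalar $c(s)=2^{\alpha s+\beta}$ whose exponent is forced by the shape of \eqref{eq:fe_SDF}.

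Then I would apply \eqref{eq:SNR_ty} to substitute $A\,\theta(1-s)$ for $\theta^\vee(1-s)$ (and analogously for $\eta$), turning the displays above into
\[
A\,\theta(1-s) = c(s)\,M(s)\,\theta(s),\qquad A\,\eta(1-s) = c(s)\,M(s)\,\eta(s).
\]
Pairing each identity on the left with $(\sqrt{3},\pm 1)$ simultaneously diagonalizes both $A$ and $M(s)$; the common factor $\pm\sqrt{3}$ cancels, and after absorbing the $\Delta_\pm$-ratio coming from $M(s)$ the resulting scalar identities are exactly the two lines of \eqref{eq:fe_SDF}.

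The principal obstacle, and the only step that is not purely mechanical, is verifying that the four lattice-dependent scalars in the second step really do collapse to a single common $c(s)$ under the specified weighted sums. This is a finite bookkeeping exercise with the indices $[L_1:L_i]\in\{1,4,4,2,8\}$ and $[L^\vee_1:L^\vee_i]\in\{1,4,4,8,2\}$, but it is genuinely delicate because these two sequences disagree for $i=4,5$; the cancellation depends sensitively on the interplay of the integer coefficients $1,-2,-1,4$, the $2^{2s}$-scalings, and that asymmetry, which is precisely why the special combinations $\theta$ and $\eta$ had to be singled out in Theorem \ref{thm:SNR_ty}.
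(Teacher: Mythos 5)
Your proposal is correct and follows essentially the same route as the paper: the paper substitutes the functional equations $\xi_i(1-s)=2^{2a_is}[L_1:L_i]^{-1}M(s)\xi_i^\vee(s)$ from \cite[Theorem 4.2]{sty} into the definitions of $\theta$ and $\eta$, finds that the lattice-dependent scalars collapse to the single factor $2^{2s-1}$ so that $\theta(1-s)=2^{2s-1}M(s)\theta^\vee(s)=2^{2s-1}M(s)A\theta(s)$ by Theorem \ref{thm:SNR_ty} (and likewise for $\eta$), and then diagonalizes exactly as you describe, via the matrix $T$ with rows $(\sqrt3,\pm1)$ and the Datskovsky--Wright identity $\Delta(1-s)TM(s)A=\Delta(s)T$. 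The only cosmetic difference is that you state the individual functional equations with the dual lattice at $1-s$ rather than at $s$, which does not affect the argument.
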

Hence the functional equations of the zeta functions
are expressed in self dual forms for all integral models.
In view of \eqref{eq:fe_SDFp} and \eqref{eq:fe_SDF},
we may say that the ``conductor'' of the Dirichlet series
$\xi_{1,\pm}(s)$, $\xi_{2,\pm}(s)$, $\xi_{3,\pm}(s)$, $\theta_\pm(s)$
and $\eta_\pm(s)$ are $2^43^3$, $2^83^3$, $2^83^3$, $2^63^3$ and $2^63^3$,
respectively\footnote{Either of these $10$ Dirichlet series are
of the form $\sum_{n\geq1}a_n/n^s$, and we can confirm that the greatest
common divisor of $\{n\mid a_n\neq0\}$ is $1$
by using the table of the coefficients in \cite{sty}.}.
We can describe the poles and residues of these Dirichlet series.
\begin{thm}\label{thm:residue}
The Dirichlet series
$\xi_{1,+}(s)$, $\xi_{2,+}(s)$, $\xi_{3,+}(s)$, $\theta_+(s)$
and $\eta_+(s)$ are holomprhic except for simple poles at $s=1$
and $s=5/6$, while
$\xi_{1,-}(s)$, $\xi_{2,-}(s)$, $\xi_{3,-}(s)$, $\theta_-(s)$
and $\eta_-(s)$ are holomorphic except for a simple pole at $s=1$.
The residues are given as follows:
\begin{gather*}
\begin{array}{l|ccccc}
\hline
\rule[-2mm]{0mm}{6mm}
&\xi_{1,+}(s)
&\xi_{2,+}(s)
&\xi_{3,+}(s)
&\theta_{+}(s)
&\eta_{+}(s)\\
\hline
\rule[-3mm]{0mm}{9mm}
\text{\rm Residue at $s=1$}
&\frac{2\sqrt3+3}{18}\pi^2
&\frac{2\sqrt3+3}{72}\pi^2
&\frac{2\sqrt3+3}{72}\pi^2
&\frac{7\sqrt3+9}{72}\pi^2
&\frac{5\sqrt3+9}{72}\pi^2\\
\rule[-3mm]{0mm}{8mm}
\text{\rm Residue at $s=5/6$}
&\frac{\Gamma(1/3)^3\zeta(1/3)}{3\pi}
&\frac{\Gamma(1/3)^3\zeta(1/3)}{12\pi}
&\frac{\Gamma(1/3)^3\zeta(1/3)}{12\pi}
&\frac{\Gamma(1/3)^3\zeta(1/3)}{3\sqrt[3]{2}\pi}
&\frac{\Gamma(1/3)^3\zeta(1/3)}{3\sqrt[3]{2}\pi}\\
\hline
\rule[-2mm]{0mm}{6mm}
\text{\rm ``Conductor''}
&2^43^3
&2^83^3
&2^83^3
&2^63^3
&2^63^3\\
\hline
\hline
\rule[-2mm]{0mm}{6mm}
&\xi_{1,-}(s)
&\xi_{2,-}(s)
&\xi_{3,-}(s)
&\theta_{-}(s)
&\eta_{-}(s)\\
\hline
\rule[-3mm]{0mm}{9mm}
\text{\rm Residue at $s=1$}
&\frac{2\sqrt3-3}{18}\pi^2
&\frac{2\sqrt3-3}{72}\pi^2
&\frac{2\sqrt3-3}{72}\pi^2
&\frac{7\sqrt3-9}{72}\pi^2
&\frac{5\sqrt3-9}{72}\pi^2\\
\hline
\rule[-2mm]{0mm}{6mm}
\text{\rm ``Conductor''}
&2^43^3
&2^83^3
&2^83^3
&2^63^3
&2^63^3\\
\hline
\end{array}
\end{gather*}
\end{thm}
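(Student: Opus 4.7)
The strategy is purely computational: given the known pole structure of the individual $\xi_{i,j}(s)$, each quantity in the table is a linear combination whose residues are determined by inspection. In \cite{shintania} (for $i=1$) and \cite{sty} (for $2\leq i\leq 5$) it is established that each $\xi_{i,j}(s)$ is meromorphic on $\C$ with simple poles only at $s=1$ and $s=5/6$, with explicit residues of the form $c_{i,j}^{(1)}\pi^2$ and $c_{i,j}^{(5/6)}\Gamma(1/3)^3\zeta(1/3)/\pi$ respectively, for known rationals $c_{i,j}^{(*)}$.

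My first step is to tabulate these $c_{i,j}^{(*)}$. From the definitions of $\xi_{i,\pm}$, $\theta_\pm$, $\eta_\pm$ I then read off, by linearity, the identities $\res_{s=s_0}\xi_{i,\pm}(s)=\sqrt3\,\res_{s=s_0}\xi_{i,1}(s)\pm\res_{s=s_0}\xi_{i,2}(s)$ for $i=1,2,3$, and similarly $\res_{s=s_0}\theta_\pm(s)$ and $\res_{s=s_0}\eta_\pm(s)$ from the linear combinations defining $\theta(s)$ and $\eta(s)$, applied componentwise at each pole $s_0\in\{1,5/6\}$. In the $\eta$ case the scalar prefactors $2^{2s}$ and $2^{1-4s}$ are specialized at $s_0$: they equal $4$ and $\tfrac18$ at $s=1$, and $2^{5/3}$ and $2^{-7/3}$ at $s=5/6$.

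The vanishing of the residues at $s=5/6$ for every minus combination is a key consistency check. For $\xi_{i,-}$ with $i=1,2,3$ this amounts to the identity $c_{i,2}^{(5/6)}=\sqrt3\,c_{i,1}^{(5/6)}$, which is built into the residues of \cite{shintania} and \cite{sty}. For $\theta_-$ and $\eta_-$ it then follows by linearity, since each component of $\theta$ and $\eta$ at $s=5/6$ is a rational multiple of the common quantity $\Gamma(1/3)^3\zeta(1/3)/\pi$, and the universal ratio $\sqrt3$ between second and first components is preserved under arbitrary scalar-weighted sums.

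The conductor entries are read off directly from the self-dual functional equations \eqref{eq:fe_SDFp} and \eqref{eq:fe_SDF}: combining the arithmetic factor $2^{as}$ (with $a=0$ for $i=1$, $a=2$ for $i=2,3$, and $a=1$ for $\theta_\pm,\eta_\pm$) with the factor $(2^43^3/\pi^4)^{s/2}$ hidden inside $\Delta_\pm(s)$ produces an effective symmetric prefactor $(2^{4+2a}3^3/\pi^4)^{s/2}$, whence the conductor equals $2^{4+2a}3^3$, giving $2^43^3,\,2^83^3,\,2^63^3$ exactly as tabulated. The main obstacle is the bookkeeping for $\eta_\pm(s)$ at $s=5/6$, where the $s$-dependent prefactors $2^{2s},2^{1-4s}$ must be combined with the residues of $\xi_{1,j},\xi_{2,j},\xi_{4,j},\xi_{5,j}$ so as to reproduce both the predicted cancellation and the specific value $\Gamma(1/3)^3\zeta(1/3)/(3\sqrt[3]2\,\pi)$ for $\eta_+$; no deep argument is needed, but the arithmetic must be tracked carefully through the table.
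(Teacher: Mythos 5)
Your proposal is correct and is essentially the paper's own argument: the authors likewise obtain the theorem by substituting the residue formulas of $\xi_{i,j}(s)$ from \cite[Theorem 4.2]{sty} into the defining linear combinations (specializing the prefactors $2^{2s}$ and $2^{1-4s}$ at the poles $s=1,5/6$) and observing the cancellation at $s=5/6$ for all the minus combinations. The only detail the paper records that you omit is the identity $\sqrt[3]{2\pi}\,\Gamma(1/3)\zeta(2/3)/(3\Gamma(2/3))=\Gamma(1/3)^3\zeta(1/3)/(2\pi)$, needed to rewrite the $s=5/6$ residues as given in \cite{sty} in the tabulated form.
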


It is an interesting phenomenon that the latter $5$
Dirichlet series are holomorphic at $s=5/6$.

Our basic approach to prove Theorem \ref{thm:SNR_ty} is
to reduce to Theorem \ref{thm:SNR_nakagawa},
as we did in the previous paper \cite{sty}
to prove Theorem \ref{thm:SNR_sty}.
However we need to argue more carefully since
the relations between $\xi_4(s),\xi_5(s)$ and $\xi_1(s)$
are not as direct as those of $\xi_2(s),\xi_3(s)$ and $\xi_1(s)$.
We look closely certain subsets of $L_1$ which are
no longer $\spl_2(\Z)$-invariant but invariant under
certain congruence subgroups such as $\Gamma_0(2)$ or $\Gamma(2)$,
and study them in terms of the {\em induction} in the category of $G$-sets.
The zeta functions behaves quite well with respect to this induction,
and these enables us to bring $\xi_4(s),\xi_5(s)$ and $\xi_1(s)$
into connection.

\bigskip

We note that for $i=1$, curious algebraic interpretations of
the set of integer orbits of $L_1$ and $L_1^\vee$ were known.
Precisely, ${\rm GL}_2(\Z)\backslash L_1$ has a canonical bijection
to the set of cubic rings, while $\spl_2(\Z)\backslash L_1^\vee$
essentially corresponds
to the set of $3$-torsions in ideal class groups of quadratic rings.
Indeed, these interpretations were key ingredient for
Nakagawa's proof of Theorem \ref{thm:SNR_nakagawa}
in terms of class field theory.
Such algebraic interpretations of integer orbits
for many other prehomogeneous vector spaces were discovered
rather systematically in Bhargava's surprising work of
{\em higher composition laws} \cite{bha}. In consideration of these results,
we expect that there might exist interesting interpretations
for integer orbits of $L_i, L_i^\vee$ for $2\leq i\leq 5$ also.
We hope the theory of integer orbits will be pursued further in the future.

This paper is organized as follows.
In Section \ref{sec:fiber}, we introduce the notion
of {\em induction}. After that we
study the set
\[
\{x\in L_1\mid P(x)\equiv l\mod 32\},
\quad
\{x\in L_1^\vee\mid P(x)/27\equiv -l\mod 32\},
\quad
l=4,20
\]
in some detail.
We prove in Proposition \ref{prop:FPE}
that actions of $\spl_2(\Z)$ to these sets are
induced from actions of $\Gamma_0(2)$ to their certain subsets.
The proof of Theorem \ref{thm:SNR_ty}
is given in Section \ref{sec:SNR}.
In Theorem \ref{thm:KSR}
we express partial zeta functions associated with the sets above
in terms of linear combinations of $\xi_i(s)$ or $\xi_i^\vee(s)$.
This enables us to reduce Theorem \ref{thm:SNR_ty}
to Theorem \ref{thm:SNR_nakagawa}.
In Section \ref{sec:fe_SDF}, we prove
Theorems \ref{thm:fe_SDF} and \ref{thm:residue}.

\bigskip

\noindent
{\bf Notations.}
The notations introduced above are used throughout this paper.
For a finite set $X$, we denote its cardinality by $|X|$.
If a group $G$ acts on a set $X$, then for $x\in X$
we put $G_x=\{g\in G\mid gx=x\}$.
In this paper we often consider congruence relations in $\Z$.
If $a-a'\in N\Z$ then we write $a\equiv a'(N)$ as well as
$a\equiv a' \mod N$. For
$a,a',a'',\dots\in\Z$,
``$a\mod N\equiv a'\equiv a''\equiv\cdots$''
means $a-a',a'-a'',\dots\in N\Z$.

The congruence subgroups of ${\rm SL}_2(\Z)$ are denoted by
\begin{gather*}
\Gamma(N)
=	\left\{
		\begin{pmatrix}p&q\\r&s\end{pmatrix}\in{\rm SL}_2(\Z)
	\ \vrule\ 
		\begin{pmatrix}p&q\\r&s\end{pmatrix}
		\equiv\begin{pmatrix}1&0\\0&1\end{pmatrix}
		\mod N
	\right\},\\
\Gamma_0(N)
=	\left\{
		\begin{pmatrix}p&q\\r&s\end{pmatrix}\in{\rm SL}_2(\Z)
	\ \vrule\ 
		r\equiv0\ (N)
	\right\},\quad
\Gamma^0(N)
=	\left\{
		\begin{pmatrix}p&q\\r&s\end{pmatrix}\in{\rm SL}_2(\Z)
	\ \vrule\ 
		q\equiv0\ (N)
	\right\}.
\end{gather*}
Hence $\Gamma(1)={\rm SL}_2(\Z)$.
Finally, we put $\EE:=2\Z$ and $\OO:=2\Z+1$,
the set of even integers and odd integers, respectively.

\section{Expressions in induced forms}
\label{sec:fiber}

To prove the main theorem, we use the notion of ``induction''
in the category of $G$-sets.
For the convenience of the reader, we summarize
its definition and basic properties.
The situation is quite similar to the induction of
representations of finite groups.
We omit the elementary proofs of
the basic facts.

Let $G$ be a group. Assume that
its subgroup $H$ acts on a set $Y$.
Then up to equivalence, there exists a unique pair
$(\iota,\widetilde Y)$ where $\widetilde Y$ is a $G$-set and
$\iota\colon Y\hookrightarrow \widetilde Y$ is an injective $H$-homomorphism
which satisfy the following conditions;
\begin{enumerate}
\item the map $\tilde\iota\colon H\backslash Y
\rightarrow G\backslash \widetilde Y$
induced from $\iota$ is bijective, and
\item for all $y\in Y$, $H_y=G_{\iota(y)}$.
\end{enumerate}
The pair $(\iota, \widetilde Y)$ is constructed as follows:
Consider an equivalence relation $\sim$ on $G\times Y$
so that
$(g,y)\sim (g',y')$ if and only if
there exists $h\in H$ such that $g'=gh^{-1}$ and $y'=hy$.
Let $\widetilde Y$ be the set of equivalence classes.
The equivalence class of $(g,y)$ is again
denoted by $(g,y)$. The well-defined map
$G\times \widetilde Y\ni (g',(g,x))\mapsto (g'g,x)\in \widetilde Y$
defines an action of $G$ on $\widetilde Y$.
Let $\iota\colon Y\ni y\mapsto (e,y)\in \widetilde Y$,
where $e\in G$ is the identity.
Then the pair $(\iota,\widetilde Y)$ satisfies the desired properties.
We denote this $\widetilde Y$ by $G\times_HY$.

Let $G$ be a group acting on a set $X$, and a subset $Y\subset X$
is invariant under the action of a subgroup $H\subset G$.
Then we can consider a natural map
$G\times_HY\ni (g,y)\mapsto gy\in X$ of $G$-sets.
When this map is bijective, we write $X=G\times_HY$
and say that {\em $(G,X)$ is induced from $(H,Y)$}.
We have the followings.
\begin{lem}\label{lem:fiber_equal}
 With the notations above,
$X=G\times_HY$ if and only if
\begin{enumerate}
\item the map $G\times Y\ni (g,y)\rightarrow gy\in X$ is surjective, and
\item for $y\in Y$ and $g\in G$, $gy\in Y$ if and only if $g\in H$.
\end{enumerate}
\end{lem}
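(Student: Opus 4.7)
The plan is to set up the natural $G$-equivariant map $\phi\colon G\times_HY\to X$, $(g,y)\mapsto gy$, and to observe that the lemma asks precisely for the conditions under which $\phi$ is bijective. Surjectivity of $\phi$ is literally condition (1), so the whole content of the statement is the equivalence between injectivity of $\phi$ and condition (2); I would prove each direction directly by unpacking the equivalence relation $(g,y)\sim(gh^{-1},hy)$ that defines $G\times_HY$.

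For the implication that (1) and (2) force bijectivity, I would suppose $\phi(g,y)=\phi(g',y')$, i.e.\ $gy=g'y'$ with $y,y'\in Y$, and set $h:=g'^{-1}g$. Then $hy=y'$, so $hy\in Y$, and condition (2) applied to $y$ and $h$ forces $h\in H$. Reading off $g'=gh^{-1}$ and $y'=hy$, one sees that $(g,y)$ and $(g',y')$ represent the same class in $G\times_HY$, giving injectivity.

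For the converse, I would assume $\phi$ is bijective. The ``if'' half of (2) is immediate from the assumption that $H$ acts on $Y$. For the ``only if'' half, given $y\in Y$ and $g\in G$ with $y':=gy\in Y$, the equalities $\phi(g,y)=y'=\phi(e,y')$ together with injectivity of $\phi$ force $(g,y)\sim(e,y')$ in $G\times_HY$. Unwinding the definition of $\sim$ yields an $h\in H$ with $e=gh^{-1}$ and $y'=hy$, and the first equality gives $g=h\in H$.

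The whole argument is essentially bookkeeping, and the main thing to watch is the direction of the equivalence relation and which representative of a class one picks; once the convention $(g,y)\sim(gh^{-1},hy)$ is fixed, both implications are immediate, so I do not anticipate any serious obstacle in carrying the proof out.
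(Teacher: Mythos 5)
Your proof is correct: the paper explicitly omits the proofs of these basic facts about induction of $G$-sets, and your argument (identifying condition (1) with surjectivity of the natural map $G\times_HY\to X$ and condition (2) with its injectivity, unwound via the defining equivalence relation) is exactly the standard verification the authors had in mind. No gaps.
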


\begin{lem}\label{lem:fiber_subset}
If $X'$ is a $G$-invariant subset of $G\times_HY$,
then $X'=G\times_H(Y\cap X')$.
\end{lem}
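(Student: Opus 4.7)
The plan is to verify the two criteria in Lemma \ref{lem:fiber_equal} for the natural map $G \times_H (Y \cap X') \to X'$ given by $(g,y) \mapsto g y$. Throughout I identify $Y$ with its image $\iota(Y) \subset G \times_H Y$, so that $Y \cap X'$ literally means this intersection inside $G \times_H Y$. Since $X'$ is $G$-invariant it is in particular $H$-invariant, and hence $Y \cap X'$ is an $H$-invariant subset of $Y$; this makes the induced $G$-set $G \times_H (Y \cap X')$ well-defined, and it maps into $X'$ under the $G$-action because $X'$ is $G$-stable.

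For surjectivity, take any $x \in X'$. Because $X = G \times_H Y$ (by the very construction) we can write $x = g y$ for some $g \in G$ and $y \in Y$. Applying $g^{-1}$ and using $G$-invariance of $X'$ yields $y = g^{-1} x \in X'$, so $y \in Y \cap X'$ and $x$ lies in the image of the natural map. For the second condition, fix $y \in Y \cap X'$ and $g \in G$. If $g y \in Y \cap X'$ then in particular $g y \in Y$, and condition (2) of Lemma \ref{lem:fiber_equal} applied to the identity $X = G \times_H Y$ forces $g \in H$. Conversely, if $g \in H$ then $g y \in Y$ by $H$-invariance of $Y$, and $g y \in X'$ by $G$-invariance of $X'$, hence $g y \in Y \cap X'$.

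I do not foresee any genuine obstacle: the statement is essentially formal. The only point requiring a little care is the bookkeeping between $Y$ and its image $\iota(Y)$ inside $G \times_H Y$. Once that identification is fixed, both conditions of Lemma \ref{lem:fiber_equal} for the pair $(G \times_H (Y \cap X'), X')$ are immediate consequences of the corresponding conditions for $(G \times_H Y, X)$ together with the $G$-invariance of $X'$.
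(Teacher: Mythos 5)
Your proof is correct: the paper explicitly omits the proofs of these basic facts about induction as elementary, and your verification of the two conditions of Lemma \ref{lem:fiber_equal} for the pair $(G\times_H(Y\cap X'),\,X')$ --- surjectivity via $G$-invariance of $X'$, and the membership criterion $gy\in Y\cap X'\iff g\in H$ via the corresponding tautological property of $\iota(Y)$ inside $G\times_HY$ --- is exactly the routine argument the authors have in mind. The one point you rightly flag, identifying $Y$ with $\iota(Y)$ and noting that $Y\cap X'$ is $H$-invariant so that the induced $G$-set is defined, is handled correctly.
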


\begin{rem}\label{rem:induction}
In the category theoretic terminology, the correspondence
$\{\text{$H$-set}\}\ni Y\mapsto G\times_HY\in\{\text{$G$-set}\}$
is the left adjoint functor of the restriction functor
$\{\text{$G$-set}\}\ni X\mapsto X\in\{\text{$H$-set}\}$,
i.e.,
${\rm Hom}_H(Y,X)\cong{\rm Hom}_G(G\times_HY,X)$.
\end{rem}

We now consider the space of binary cubic forms.
For $l,N\in\Z$, we put
\[
L_1^{\equiv l(N)}
:=\{x\in L_1\mid P(x)\equiv l\ (N)\},
\qquad
(L_1^\vee)^{\cong l(N)}
:=\{x\in L_1^\vee\mid P(x)/27\equiv l\ (N)\}.
\]

The purpose of this section is to prove the following.
\begin{prop}\label{prop:FPE}
We put
\begin{align*}
X_1&=\{(a,b,c,d)\in\Z^4\mid b\in\OO, c\in2\OO, d\in 4\EE\},\\
X_2&=\{(a,b,c,d)\in\Z^4\mid b\in\OO, c\in2\OO, d\in 4\OO\},\\
X_3&=\{(a,b,c,d)\in\Z^4\mid a\in\OO, b,c\in\EE, d\in 2\OO\}
\end{align*}
and $X_i^\vee=X_i\cap L_1^\vee$ for $i=1,2,3$.
Then
\begin{align*}
&L_1^{\equiv4(32)}
	=\Gamma(1)\times_{\Gamma_0(2)}X_1,
&&L_1^{\equiv20(32)}
	=\Gamma(1)\times_{\Gamma_0(2)}(X_2\sqcup X_3),\\
&(L_1^\vee)^{\cong-20(32)}
	=\Gamma(1)\times_{\Gamma_0(2)}X_1^\vee,
&&(L_1^\vee)^{\cong-4(32)}
	=\Gamma(1)\times_{\Gamma_0(2)}(X_2^\vee\sqcup X_3^\vee).
\end{align*}
\end{prop}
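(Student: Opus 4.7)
The plan is to apply Lemma \ref{lem:fiber_equal} to establish the two identities concerning $L_1$, and to deduce the two dual identities from Lemma \ref{lem:fiber_subset}. For the dual step, note that since $27\cdot 20\equiv -4$ and $27\cdot 4\equiv -20\pmod{32}$, the sets $(L_1^\vee)^{\cong -20(32)}$ and $(L_1^\vee)^{\cong -4(32)}$ are precisely the subsets $L_1^\vee\cap L_1^{\equiv 4(32)}$ and $L_1^\vee\cap L_1^{\equiv 20(32)}$, which are $\Gamma(1)$-invariant subsets of the non-dual induced sets. Since $X_i\cap L_1^\vee=X_i^\vee$ by definition, Lemma \ref{lem:fiber_subset} produces the dual identities from the non-dual ones.

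For the non-dual identities I first dispose of two bookkeeping prerequisites: each $X_i$ is $\Gamma_0(2)$-invariant (checked on a set of generators of $\Gamma_0(2)$, e.g., $\pm I$, $\begin{pmatrix}1&1\\0&1\end{pmatrix}$ and $\begin{pmatrix}1&0\\2&1\end{pmatrix}$), and each $X_i$ lies in the claimed residue class modulo $32$ (checked by expanding $P(a,b,c,d)\bmod 32$ under the parity constraints defining $X_i$). Both are direct computations.

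The substantive step is conditions (1) and (2) of Lemma \ref{lem:fiber_equal}, which I handle via reduction modulo $2$. Since $\Gamma(1)$ surjects onto $\mathrm{SL}_2(\F_2)\cong S_3$ acting transitively on $\mathbb{P}^1(\F_2)$, and since $P(x)\equiv 0\pmod 2$ forces $x\bmod 2$ to have a repeated root, every $x\in L_1^{\equiv 4(32)}\cup L_1^{\equiv 20(32)}$ can be moved by $\Gamma(1)$ so that either (i) $x\bmod 2=u^3$, i.e., $a$ odd and $b,c,d\in\EE$, or (ii) $x\bmod 2\in\{u^2v,\,u^2(u+v)\}$, i.e., $b$ odd and $c,d\in\EE$. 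A case analysis of $P\pmod{32}$ in terms of $\gamma=c/2$ and $\delta=d/2$ then shows: the triple-root stratum produces only $P\equiv 0,16,20\pmod{32}$, and $P\equiv 20$ precisely when $\delta$ is odd, recovering $X_3$; the double-root stratum with $P\equiv 4\pmod{32}$ forces $\gamma$ odd and $\delta\equiv 0\pmod 4$, recovering $X_1$; and the double-root stratum with $P\equiv 20\pmod{32}$ forces $\gamma$ odd and $\delta\equiv 2\pmod 4$, recovering $X_2$. This gives condition (1) of Lemma \ref{lem:fiber_equal}. Condition (2) follows from the same geometric picture: for $y\in X_1\cup X_2\cup X_3$, the repeated root of $y\bmod 2$ lies at $[0:1]$; hence if $gy$ lies in the corresponding family, the image of $g$ in $\mathrm{SL}_2(\F_2)$ stabilizes $[0:1]$, and the stabilizer of $[0:1]$ is exactly the reduction of $\Gamma_0(2)$.

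The main obstacle is the case-by-case residue computation of $P\bmod 32$ that pins down the $2$-adic conditions on $c$ and $d$: one must track the contributions of $b^2c^2$, $18abcd$, $-4ac^3$, $-4b^3d$ and $-27a^2d^2$ modulo $32$ separately in each mod-$2$ stratum. This is routine but requires care, especially to rule out the would-be residue class $4$ in the triple-root case.
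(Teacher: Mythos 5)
Your proposal is correct, and while its skeleton coincides with the paper's (Lemma \ref{lem:fiber_equal} for the two identities over $L_1$, then the congruences $27\cdot4\equiv-20$, $27\cdot20\equiv-4\ (32)$ together with Lemma \ref{lem:fiber_subset} for the dual ones), you verify the two hypotheses of Lemma \ref{lem:fiber_equal} by a genuinely different route. The paper first classifies $P\equiv4\ (16)$ into five parity cases (Lemma \ref{lem:4mod16}), refines this mod $32$ to get explicit disjoint decompositions $L_1^{\equiv4(32)}=X_1\sqcup X_1'\sqcup X_1''$, etc.\ (Lemma \ref{lem:disjoint_union}), then exhibits coset representatives $\tau,\sigma$ of $\Gamma(1)/\Gamma_0(2)$ with $X_i'=\tau X_i$, $X_i''=\sigma X_i$ and checks the stabilizer condition from the explicit $4\times4$ matrix of the symmetric-cube action. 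You instead exploit the $\Gamma(1)$-equivariant invariant ``position in $\mathbb{P}^1(\F_2)$ of the repeated root of $x\bmod 2$'': transitivity of $\spl_2(\F_2)$ reduces the surjectivity check to the single fiber over $[0:1]$ (so you only compute $P\bmod 32$ on the two strata $a\in\OO,\,b,c,d\in\EE$ and $b\in\OO,\,c,d\in\EE$ rather than on all five cases), and condition (2) comes for free because $\Gamma_0(2)$ is exactly the preimage of the stabilizer of $[0:1]$. I checked your residue claims on both strata and they are right (triple root: $P\equiv 16\beta\gamma(\beta\gamma+a\delta)+20\delta^2$ type computation gives only $0,16,20$, with $20$ iff $\delta$ odd; double root: $P\equiv 4+16\delta'$ when $\gamma$ is odd and $\delta=2\delta'$). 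Two small points you should make explicit: the case $x\equiv0\ (2)$ must be excluded (harmless, since $P(2y)=16P(y)$ forces $P\equiv0$ or $16\ (32)$), and condition (2) for the union $X_2\sqcup X_3$ needs the remark that a linear substitution cannot turn a double root into a triple root, so the repeated-root argument applies uniformly to the union. Your approach buys a more conceptual explanation of why $\Gamma_0(2)$ appears and less computation; the paper's buys explicit formulas for $X_i'$, $X_i''$ and complete elementarity.
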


We start with a lemma.
\begin{lem}\label{lem:4mod16}
For $x=(a,b,c,d)\in\Z^4$, $P(x)\equiv4\ (16)$ if and only if
one of the followings holds;
\begin{enumerate}
\item $a,b,c,d\in\OO, a+b+c+d\in2\OO$,
\item $b,c\in\EE, ad\in2\OO$,
\item $a\in 2\EE, b\in 2\OO, c\in\OO$,
\item $d\in 2\EE, c\in 2\OO, b\in\OO$,
\item $b+c\in\OO, a+c, b+d\in 2\EE$.
\end{enumerate}
\end{lem}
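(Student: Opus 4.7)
The plan is to compute $P(x)\pmod{16}$ by direct case analysis on the parities of $a,b,c,d$, with further refinement modulo $4$ where needed. Since $18\equiv 2$ and $-27\equiv 5\pmod{16}$, we have
\[
P(x)\equiv b^2c^2+2abcd-4ac^3-4b^3d+5a^2d^2\pmod{16}.
\]
Reducing further modulo $4$ gives $P(x)\equiv(ad+bc)^2\pmod 4$, so the hypothesis $P(x)\equiv 4\pmod{16}$ forces $ad+bc\in\EE$. This at once eliminates every parity pattern of $(a,b,c,d)$ for which $ad+bc$ is odd.

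Next I would dispose of the trivial extreme: when $a,b,c,d$ are all even, every monomial of $P(x)$ lies in $16\Z$, so $P(x)\equiv 0\pmod{16}$ and this pattern is excluded. What remains is three families of parity patterns: (A) all four of $a,b,c,d$ odd; (B) exactly two even with the even pair drawn from $\{\{a,b\},\{a,c\},\{b,d\},\{c,d\}\}$; (C) exactly three even, with any one of $a,b,c,d$ odd.

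For each retained pattern, I would substitute $a=2a'$ for each even variable and compute $P(x)\pmod{16}$ using the elementary facts $n^2\equiv 1\pmod 8$ for $n$ odd, $n^2\equiv 4\pmod{16}$ for $n\equiv 2\pmod 4$, and $n^3\equiv n\pmod 8$ for $n$ odd. In each case $P(x)\pmod{16}$ collapses to a short polynomial in the parities of $a',b',\ldots$ (together with residues modulo $4$ of the odd coordinates, which drop out of the final congruence in Case A), so the refinement producing $P(x)\equiv 4\pmod{16}$ can be read off directly. The answers pair with the stated conditions as follows: Case A gives $a+b+c+d\in 2\OO$ of (1); the two Case C patterns with $b,c$ both even yield $ad\equiv 2\pmod 4$ of (2); the Case B pattern $\{a,b\}$-even and the Case C pattern with $c$ as the odd variable combine into $a\in 2\EE,\,b\in 2\OO$ of (3), and the $(a,b)\leftrightarrow(d,c)$ mirror image gives (4); the remaining Case B patterns $\{a,c\}$-even and $\{b,d\}$-even produce $b+c\in\OO$ together with $a+c,\,b+d\in 2\EE$ of (5). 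The converse direction is by the same term-by-term check applied directly to each of (1)--(5).

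The main obstacle is bookkeeping rather than insight. Several parity patterns split further depending on whether each even coordinate is $\equiv 0$ or $\equiv 2\pmod 4$, and one must track these sub-cases carefully to verify that the refinements which survive collect into exactly the five families (1)--(5) and no more. Once this is organized in a table, the equivalence is mechanical.
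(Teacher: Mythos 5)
Your plan is correct and is essentially the paper's own argument: both proofs first use $P(x)\equiv(ad+bc)^2\pmod 4$ to force $ad+bc$ even, and then run a finite parity case analysis refined modulo $4$ to sort the surviving patterns into the five listed conditions. The only difference is organizational --- the paper groups cases by the parities of $bc$, $ad$ and then of $b$, $c$, and streamlines the bookkeeping via the identity $P=(bc+ad)^2+4(a^2d^2-ac^3-b^3d)+16(abcd-2a^2d^2)$, whereas you enumerate the full parity vector of $(a,b,c,d)$ --- but the substance is the same.
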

\begin{proof}
We write $P=P(x)$. We have
$P=(bc+ad)^2+4R+16(abcd-2a^2d^2)$ where $R=a^2d^2-ac^3-b^3d$.
Hence $P\mod 16\equiv(bc+ad)^2+4R$
and if $P\equiv4\ (16)$ then $bc+ad\in \EE$.
We note that in this case $(bc+ad)^2\equiv0\ (16)$ or $\equiv4\ (16)$
according as $bc+ad\in2\EE$ or $bc+ad\in2\OO$.
Also if $n\in\OO$ then $n^2\equiv 1\ (8)$ in general.

Assume $bc,ad\in\OO$. Then $a,b,c,d\in\OO$
and so $R\mod 4\equiv1-ac-bd\equiv1,3$.
Hence $P\equiv 4\ (16)$ if and only if
$bc+ad\in2\EE$ and $1-ac-bd\in 4\Z+1$.
Under the condition $a,b,c,d\in\OO$,
this is equivalent to $a+b+c+d\in2\OO$.
This is the case (1).

For the rest we consider the case $bc,ad\in\EE$.
In this case $R\mod 4\equiv-ac^3-b^3d$.
First assume $b,c\in\EE$.
Then $R\equiv0\ (4)$ and hence
$P\equiv4\ (16)$ if and only if $bc+ad\in2\OO$.
Hence $ad\in2\OO$ and we get the condition (2).
Next assume $b\in\EE$ and $c\in\OO$.
Since $R\equiv-ac\ (4)$,
$P\equiv4\ (16)$ if and only if
either (i) $ad+bc\in2\OO, ac\in 4\Z$
or (ii) $ad+bc\in2\EE, ac+1\in 4\Z$.
In the case (i), since $c\in\OO$, we have $a\in2\EE$
and so $ad+bc\in2\OO$ if and only if $b\in 2\OO$.
This is the case (3).
In the case (ii), since $a\in\OO$,
we have $d\in\EE$. Under the condition
$a,c\in\OO, b,d\in\EE$,
(ii) hold if and only if $a+c,b+d\in2\EE$.
Hence we get the condition
(A) : $b\in\EE, c\in\OO, a+c, b+d\in2\EE$.
Finally we assume $b\in\OO$ and $c\in\EE$.
By the same argument, $P\equiv 4\ (16)$ if and only if
either (4) or
(B) : $b\in\OO, c\in\EE, a+c,b+d\in2\EE$
is satisfied.
Since (A) or (B) is equivalent to the condition (5),
we have the lemma.
\end{proof}
\begin{lem}\label{lem:disjoint_union}
We put
\begin{align*}
X_1'&=	\{(a,b,c,d)\in\Z^4\mid c\in\OO, b\in 2\OO, a\in 4\EE\},\\
X_1''&=	\{(a,b,c,d)\in\Z^4\mid b+c\in\OO, a+c\in 2\EE, a+b+c+d\in 4\EE\},\\
X_2'&=	\{(a,b,c,d)\in\Z^4\mid c\in\OO, b\in 2\OO, a\in 4\OO\},\\
X_2''&=	\{(a,b,c,d)\in\Z^4\mid b+c\in\OO, a+c\in 2\EE, a+b+c+d\in 4\OO\},\\
X_3'&=	\{(a,b,c,d)\in\Z^4\mid d\in\OO, b,c\in \EE, a\in 2\OO\},\\
X_3''&=	\{(a,b,c,d)\in\Z^4\mid a,b,c,d\in\OO, a+b+c+d\in 2\OO\}.
\end{align*}
Then
\[
L_1^{\equiv4(32)}=X_1\sqcup X_1'\sqcup X_1'',\quad
L_1^{\equiv20(32)}=X_2\sqcup X_2'\sqcup X_2''\sqcup X_3\sqcup X_3'\sqcup X_3''.
\]
\end{lem}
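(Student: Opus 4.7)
The plan is to refine Lemma \ref{lem:4mod16} from modulus $16$ to modulus $32$. Note that $L_1^{\equiv 4(32)}\sqcup L_1^{\equiv 20(32)}$ coincides with $\{x\in L_1\mid P(x)\equiv 4\ (16)\}$, which Lemma \ref{lem:4mod16} partitions into five pairwise disjoint cases (1)--(5). The first step is to identify each of the ten subsets $X_i,X_i',X_i''$ with a refinement of exactly one of these cases: $X_3''$ is case (1); $X_3$ and $X_3'$ are the two sub-cases of (2), distinguished by which of $a,d$ is odd; $X_1',X_2'$ split case (3) by $a\in 4\EE$ versus $a\in 4\OO$; $X_1,X_2$ split case (4) analogously by $d$; and $X_1'',X_2''$ split case (5) by the residue of $a+b+c+d$ modulo $8$. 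For the last of these one uses that in case (5) the condition $b+d\in 2\EE$ follows automatically from $a+c\in 2\EE$ together with $a+b+c+d\in 4\Z$, so $X_1''\cup X_2''$ really equals case (5). Pairwise disjointness of the three $X$'s in each union is then a short parity check (for example $X_1$ forces $b+d$ odd while $X_1''$ forces $b+d\in 4\Z$).

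The second step is to determine $P(x)\pmod{32}$ in each refined piece. I would use the expansion from the proof of Lemma \ref{lem:4mod16}, which modulo $32$ simplifies (via $32\mid 32a^2d^2$) to
\[
P(x)\equiv(bc+ad)^2+4R+16abcd\pmod{32},\qquad R=a^2d^2-ac^3-b^3d.
\]
In case (1), all of $a,b,c,d$ are odd; writing $X=bc+ad$ and $Y=ac+bd$, the constraint $a+b+c+d\in 2\OO$ forces $(b-a)(c-d)\in 8\Z$ and hence $X\equiv Y\pmod 8$. Using $R\equiv 1-ac-bd\pmod 8$ and $X=4m$ with $Y=4m+8n$, one obtains $X^2-4Y\equiv 16m(m-1)\equiv 0\pmod{32}$, and the total simplifies to $0+4+16\equiv 20$. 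The two sub-cases of (2) are treated by the same kind of direct expansion (substituting $b=2b'$, $c=2c'$ and using $d^2\equiv 4\pmod{32}$ when $d\in 2\OO$), again giving $P\equiv 20\pmod{32}$ unconditionally, so $X_3'',X_3,X_3'\subset L_1^{\equiv 20(32)}$.

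For the cases that genuinely split, the cleanest route is a shift argument. The shift $a\mapsto a+4$ alters $P$ by $\Delta=72bcd-16c^3-216ad^2-432d^2$, and under the constraints of case (3) a short mod-$32$ reduction (distinguishing $d$ even and $d$ odd) yields $\Delta\equiv 16\pmod{32}$; hence $P\pmod{32}$ is controlled by $a\pmod 8$, and the base point $(0,2,1,0)$ with $P=4$ identifies $X_1'$ with $L_1^{\equiv 4(32)}\cap\text{case (3)}$ and $X_2'$ with its complement. The shift $d\mapsto d+4$ handles cases (4) and (5) by exactly the same logic.

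The main obstacle is the shift computation in case (5), because the splitting parameter $a+b+c+d$ is not itself a monomial of $P$ and both sub-cases of (5) ($b$ even versus $c$ even) must be handled separately. In the harder sub-case ($b$ even, so $a$ odd and $d$ even), the cancellation $\Delta\equiv 16\pmod{32}$ forces one to use $b+d\in 4\Z$ to relate the parities of $b/2$ and $d/2$, collapsing a sum of three nonzero contributions of size $16$; in the other sub-case ($c$ even, so $a$ even and $d$ odd), all terms of $\Delta$ except $-16b^3$ vanish mod $32$. Once the shift analysis is complete, the base point $(1,0,3,0)$, for which $P=-108\equiv 20\pmod{32}$ and $a+b+c+d=4\in 4\OO$, identifies $X_2''$ with $L_1^{\equiv 20(32)}\cap\text{case (5)}$ and $X_1''$ with $L_1^{\equiv 4(32)}\cap\text{case (5)}$, which combined with the analysis above yields the two claimed decompositions.
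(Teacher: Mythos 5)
Your overall strategy---partitioning $\{x\in L_1\mid P(x)\equiv 4\ (16)\}$ into the five cases of Lemma \ref{lem:4mod16} and then deciding case by case whether $P\equiv4$ or $20\pmod{32}$---is exactly the paper's, and your first step is sound: the matching of the ten sets to refinements of the five cases is correct (including the observation that $b+d\in2\EE$ follows from $a+c\in2\EE$ and $a+b+c+d\in4\Z$, so that $X_1''\sqcup X_2''$ is all of case (5)), and your unconditional computations in cases (1) and (2), giving $P\equiv20\pmod{32}$, check out.

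The gap is in the shift argument for the splitting cases (3), (4), (5). Knowing that $P(a+4,b,c,d)\equiv P(a,b,c,d)+16\pmod{32}$ throughout case (3), together with the value of $P$ at the single base point $(0,2,1,0)$, does not give the asserted conclusion that ``$P\bmod{32}$ is controlled by $a\bmod 8$.'' The shift relation constrains only the dependence on $a$; it is perfectly compatible with $P\bmod{32}$ also varying with $b,c,d$ inside the case-(3) constraints. Concretely, any function of the form $4+16\bigl((a/4+h(b,c,d))\bmod 2\bigr)$ with $h$ arbitrary satisfies your shift relation and agrees with your base point whenever $h(2,1,0)=0$, yet is not determined by $a\bmod 8$. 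To close the gap you must also show that $P\bmod{32}$ is unchanged as $b,c,d$ range over their permitted residue classes---either by further shift computations in those variables, or, as the paper does, by expanding $P\bmod{32}$ directly into an explicit affine function of the splitting parameter ($P\equiv 4-4a$ in case (3), $P\equiv 4-4d$ in case (4), $P\equiv 4(a+b+c+d)+4$ in case (5)); that computation is no longer than your evaluation of $\Delta$ and settles the constancy at once. The same objection applies to your treatment of cases (4) and (5) via $d\mapsto d+4$ and the base point $(1,0,3,0)$: the conclusions you draw are true, but the shift-plus-one-base-point method as written does not prove them.
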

\begin{proof}
If $P(x)\equiv4\ (16)$ then $P(x)\equiv 4\ (32)$ or $P(x)\equiv20\ (32)$.
Hence we can prove this lemma by examining each of five cases
listed in Lemma \ref{lem:4mod16}.
Since the argument is elemental and simple,
we briefly sketch the outline of the proof.

In case (1),
Since $R\mod 8\equiv a^2d^2-ac^3-b^3d\equiv 1-(ac+bd)$,
we have $P\mod 32\equiv (ad+bc)^2-4(ac+bd)+20$.
We note that $ad+bc,ac+bd\in2\EE$ (see the proof of
the previous lemma).
Moreover, $ad+bc+ac+bd=(a+b)(c+d)\in8\Z$ since
$a+b,c+d\in\EE$ and $a+b+c+d\in2\OO$.
Hence $P\equiv20\ (32)$. So this case corresponds to $X_3''$.

In case (2), since $ad+bc\in2\OO$, $(ad+bc)^2\equiv4\ (32)$.
Also $R\equiv4\ (8)$. Hence $P\equiv 20\ (32)$.
So this case corresponds to $X_3$ and $X_3'$.
In case (3), since $ad+bc\in2\OO$ also, $(ad+bc)^2\equiv4\ (32)$.
Moreover, $R\mod 8\equiv -ac^3\equiv -a$.
Hence $P\equiv 4-4a\ (32)$.
The case $a\in4\EE$ corresponds to $X_1'$ and
the case $a\in4\OO$ corresponds to $X_2'$.
In case (4), by the same argument we obtain
$X_1$ and $X_2$.

In case (5), first assume that $b\in\EE, c\in\OO$
Then $a\in\OO, d\in\EE$. We put $a+c=4m$ and $b+d=4n$
where $n,m\in\Z$.
Then since
\begin{align*}
ad+bc\mod 8&\equiv ad+(4m-a)(4n-d)\equiv 2ad-4an\equiv 2d-4n,\\
R\mod 8&\equiv d^2-ac\equiv 2d-a(4m-a)\equiv 2d+a^2-4am\equiv 2d-4m+1,
\end{align*}
we have
\[
P\mod 32\equiv 4(2d-4n)+4(2d-4m+1)\equiv 16(m+n)+4
\equiv4(a+b+c+d)+4.
\]
If we assume $b\in\OO, c\in\EE$, by the same argument
we have $P\mod 32\equiv 4(a+b+c+d)+4$.
The case $a+b+c+d\in4\EE$ corresponds to $X_1''$ and
the case $a+b+c+d\in4\OO$ corresponds to $X_2''$.
This completes the proof.
\end{proof}

We now give the proof of Proposition \ref{prop:FPE}.
\begin{proof}[Proof of Proposition \ref{prop:FPE}]
Let
$\tau=\begin{pmatrix}0&-1\\1&0\end{pmatrix}$
and
$\sigma=\begin{pmatrix}1&1\\-1&0\end{pmatrix}$.
Then $\{e,\tau,\sigma\}$ is a complete representative
of $\Gamma(1)/\Gamma_0(2)$.
Since
\[
\tau^{-1}(a,b,c,d)=(-d,c,-b,a),\quad
\sigma^{-1}(a,b,c,d)=(-d,c+3d,-b-2c-3d,a+b+c+d)
\]
by a simple computation we have $X_i'=\tau X_i$ and $X_i''=\sigma X_i$
for $i=1,2,3$.
Hence by Lemma \ref{lem:fiber_equal}
it is enough to show that for $\gamma\in\Gamma(1)$ and $x\in X_i$,
$x'=\gamma x\in X_i$ if and only if $\gamma\in\Gamma_0(2)$.
Let $x=(a,b,c,d)$, $x'=(a',b',c',d')$ and
$\gamma=\begin{pmatrix}p&q\\r&s\end{pmatrix}\in\Gamma(1)$.
Then
\[
\begin{pmatrix} a'\\b'\\c'\\d'\end{pmatrix}
=
\begin{pmatrix}
p^3		&p^2q		&pq^2		&q^3		\\
3p^2r		&p^2s+2pqr	&q^2r+2pqs	&3q^2s		\\
3pr^2		&qr^2+2prs	&ps^2+2qrs	&3qs^2		\\
r^3		&r^2s		&rs^2		&s^3		\\
\end{pmatrix}
\begin{pmatrix} a\\b\\c\\d\end{pmatrix}.
\]
We consider the case $i=1$. Let $x\in X_1$.
It is easy to see that if $\gamma\in\Gamma_0(2)$ then $x'\in X_1$.
Conversely, assume $x'\in X_1$. Then $b'\mod 2\equiv pra+psb$.
So $b'\in\OO$ implies $p\in\OO$. Hence $b'\mod 2\equiv ra+sb$,
$d'\mod 2\equiv ra+rsb$. So $b'-d'\in\OO$ implies $r\in\EE$,
namely $\gamma\in\Gamma_0(2)$. The cases $i=2,3$ are similarly proved.
Hence by Lemma \ref{lem:disjoint_union}, we obtain the first two formulas
of the proposition.
Since $L_1^{\equiv l(32)}\cap L_1^\vee=(L_1^\vee)^{\cong 3l(32)}$,
the rest two follow from
$L_1^{\equiv4(32)}\cap L_1^\vee=(L_1^\vee)^{\cong 12(32)}
=(L_1^\vee)^{\cong -20(32)}$,
$L_1^{\equiv20(32)}\cap L_1^\vee=(L_1^\vee)^{\cong 60(32)}
=(L_1^\vee)^{\cong -4(32)}$
and Lemma \ref{lem:fiber_subset}.
\end{proof}

\section{Proof of the main theorem}
\label{sec:SNR}

In this section we prove Theorem \ref{thm:SNR_ty}.
We start with a definition.

\begin{defn}\label{defn:partialzeta}
For a congruence subgroup $\Gamma$ of ${\rm SL}_2(\Z)$ and
a $\Gamma$-invariant subset $X$ of a lattice,
we define
\[
\xi_{j}(X,\Gamma,s):=\sum_{\substack{x\in{\Gamma}\backslash X\\(-1)^{j-1}P(x)>0}}
\frac{|\Gamma_x|^{-1}}{|P(x)|^s},
\qquad
\xi(X,\Gamma,s)
:=
\begin{pmatrix}\xi_1(X,\Gamma,s)\\\xi_2(X,\Gamma,s)\end{pmatrix}
\]
and call them {\em partial zeta functions} for the pair $(X,\Gamma)$.
\end{defn}
In this section the complex variable $s$ is always fixed
and so we mostly drop $s$ and write as $\xi(X,\Gamma)$.
If the expressions of $X,\Gamma$ contain parentheses
we may also write as $\xi[X,\Gamma]$.
By definition,
$\xi_i=\xi[L_i,\Gamma(1)]$ and $\xi_i^\vee=3^{3s}\xi[L_i^\vee,\Gamma(1)]$.
We define as follows.

\begin{defn}
We put
\[
\xi_1^{\equiv l(N)}:=\xi[L_1^{\equiv l(N)},\Gamma(1)],
\qquad
(\xi_1^\vee)^{\cong l(N)}:=\xi[(L_1^\vee)^{\cong l(N)},\Gamma(1)].
\]
\end{defn}
The crucial step of our proof of the main theorem is to express
$\xi_1^{\equiv l(32)}$ (resp. $(\xi_1^\vee)^{\cong -l(32)}$) for $l=4,20$
in terms of linear combinations of $\xi_i$'s (resp. $\xi_i^\vee$'s).
After we prepare necessary tools, we will do this
in Theorem \ref{thm:KSR} by using Proposition \ref{prop:FPE}.

To study the zeta functions, It will be convenient to consider
the following twisted action of $G_\Q:={\rm GL}_2(\Q)$ on $V_\Q$
which is compatible with the action of $\spl_2(\Z)$:
\[
(g\cdot x)(u,v)=\frac{1}{\det g}\cdot x(pu+rv, qu+sv),
\qquad
x\in V_\Q,
\quad
g=\begin{pmatrix}p&q\\r&s\\\end{pmatrix}\in G_\Q.
\]
Then $P(gx)=(\det g)^2P(x)$.
The followings are basic properties of the partial zeta functions.
\begin{prop}\label{prop:partialzeta}
The followings hold.
\begin{enumerate}
\item
If $X,X'$ are $\Gamma$-invariant and $X\cap X'=\emptyset$,
$\xi(X\sqcup X',\Gamma)=\xi(X,\Gamma)+\xi(X',\Gamma)$.
\item
If $X$ is $\Gamma$-invariant and $g\in G_\Q$,
$\xi(X,\Gamma)=(\det g)^{2s}\xi(gX,g\Gamma g^{-1})$.
\item
If $\Gamma'\subset \Gamma$ and $X$ is $\Gamma'$-invariant,
$\xi(X,\Gamma')=\xi(\Gamma\times_{\Gamma'}X,\Gamma)$.
\item
If $\Gamma'\subset \Gamma$ and $X$ is $\Gamma$-invariant,
$\xi(X,\Gamma')=[\Gamma:\Gamma']\xi(X,\Gamma)$.
\end{enumerate}
\end{prop}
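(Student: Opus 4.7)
The proposition collects four general identities for partial zeta functions that are essentially formal consequences of Definition \ref{defn:partialzeta}. I will sketch each in turn.

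Part (1) is immediate: since $X$ and $X'$ are disjoint $\Gamma$-invariant sets, the orbit space $\Gamma\backslash(X\sqcup X')$ is the disjoint union of $\Gamma\backslash X$ and $\Gamma\backslash X'$, and stabilizers and discriminants are unaffected, so the defining sum splits.

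For (2), the key point is that the twisted action satisfies $P(gx)=(\det g)^2 P(x)$; in particular the sign of $P$ is preserved, so the condition $(-1)^{j-1}P(x)>0$ matches the corresponding condition for $gx$. The map $x\mapsto gx$ is a bijection $\Gamma\backslash X\to(g\Gamma g^{-1})\backslash gX$ which conjugates stabilizers, giving $|\Gamma_x|=|(g\Gamma g^{-1})_{gx}|$. Substituting $|P(gx)|^s=(\det g)^{2s}|P(x)|^s$ into the defining sum yields the identity.

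For (3), I would invoke the defining properties of induction recalled in Section \ref{sec:fiber}. The injection $\iota\colon X\hookrightarrow \Gamma\times_{\Gamma'}X$ induces a bijection $\Gamma'\backslash X\to\Gamma\backslash(\Gamma\times_{\Gamma'}X)$ and satisfies $\Gamma'_x=\Gamma_{\iota(x)}$. Realizing $\Gamma\times_{\Gamma'}X$ as the corresponding $\Gamma$-invariant subset of the ambient lattice (so $\iota$ becomes a genuine inclusion), the discriminant agrees on $x$ and $\iota(x)$, and the two sums are termwise equal.

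Part (4) is the only piece that needs an actual calculation, albeit a standard one. For each $\Gamma$-orbit $\Gamma\cdot x$, the $\Gamma'$-orbits inside it are parametrized by the double cosets $\Gamma'\backslash\Gamma/\Gamma_x$, and the stabilizer in $\Gamma'$ of a representative $\gamma x$ is $\Gamma'\cap\gamma\Gamma_x\gamma^{-1}$. Counting $\Gamma\cdot x\cong\Gamma/\Gamma_x$ as a disjoint union of $\Gamma'$-orbits of sizes $[\Gamma':\Gamma'\cap\gamma\Gamma_x\gamma^{-1}]$ gives the combinatorial identity
\[
\sum_{y\in\Gamma'\backslash\Gamma\cdot x}\frac{1}{|\Gamma'_y|}=\frac{[\Gamma:\Gamma']}{|\Gamma_x|}.
\]
Since $P$ is constant on each $\Gamma$-orbit, multiplying by $|P(x)|^{-s}$ and summing over $x\in\Gamma\backslash X$ gives $\xi(X,\Gamma')=[\Gamma:\Gamma']\xi(X,\Gamma)$. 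I do not foresee any real obstacle; the proof is essentially bookkeeping, and the only slightly delicate point is the double coset identity used in (4), which reduces to orbit-stabilizer counting and is valid because $\Gamma_x$ is finite.
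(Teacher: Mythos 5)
Your parts (1)--(3) are exactly the paper's treatment (the paper dismisses them as immediate from the definitions, and your elaborations---the sign of $P$ being preserved because $(\det g)^2>0$, and the two defining properties of the induction giving a termwise identification of the sums---are the right reasons). Part (4) also reduces, as in the paper, to the double-coset identity
\[
\sum_{y\in\Gamma'\backslash\Gamma x}\frac{1}{|\Gamma'_y|}=\frac{[\Gamma:\Gamma']}{|\Gamma_x|},
\]
so the overall route is the same.

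However, your justification of that identity has a misstep: you propose to count $\Gamma\cdot x\cong\Gamma/\Gamma_x$ as a disjoint union of $\Gamma'$-orbits of sizes $[\Gamma':\Gamma'\cap\gamma\Gamma_x\gamma^{-1}]$. Here $\Gamma'$ is an infinite congruence subgroup and $\Gamma'\cap\gamma\Gamma_x\gamma^{-1}$ is finite, so every one of those indices is infinite, as is $\Gamma/\Gamma_x$ itself; summing infinite orbit sizes cannot yield the finite identity you want. The correct bookkeeping (and the paper's) runs the orbit count on the \emph{finite} side: let the finite group $\Gamma_x$ act on the right on the finite set $\Gamma'\backslash\Gamma$ of cardinality $[\Gamma:\Gamma']$. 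Its orbits are again the double cosets $\Gamma'\backslash\Gamma/\Gamma_x$, and the orbit of $\Gamma'\gamma$ has size $|\Gamma_x|/|\gamma^{-1}\Gamma'\gamma\cap\Gamma_x|=|\Gamma_x|/|\Gamma'_{\gamma x}|$; summing these finite orbit sizes over the double cosets gives $[\Gamma:\Gamma']$, which is the identity. This is a local repair rather than a change of strategy, but as written your step (4) does not go through.
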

\begin{proof}
Since (1), (2) and (3) immediately follow from the definition,
we consider (4).
Let $X^\pm=\{x\in X\mid \pm P(x)>0\}$. We have
\[
	\sum_{y\in\Gamma'\backslash X^\pm}
	\frac{|\Gamma'_y|^{-1}}{|P(y)|^s}
=	\sum_{x\in\Gamma\backslash X^\pm}
	\sum_{y\in \Gamma'\backslash \Gamma x}
	\frac{|\Gamma'_y|^{-1}}{|P(y)|^s}
=	\sum_{x\in\Gamma\backslash X^\pm}
	\frac{|\Gamma_x|^{-1}}{|P(x)|^s}
	\sum_{y\in \Gamma'\backslash \Gamma x}
\frac{|\Gamma_x|}{|\Gamma'_y|}.
\]
Hence (4) follows from the following result
in elementary group theory.
\end{proof}

\begin{lem}
Assume a group $G$ acts on a set $X$
and $H$ be an index finite subgroup of $G$.
Then for $x\in X$ with $|G_x|<\infty$,
\[
\sum_{y\in H\backslash Gx}\frac{|G_x|}{|H_y|}=[G:H],
\]
where in the summation of the left hand side, $y$ runs through
all the representatives of $H$-orbits in $Gx$.
\end{lem}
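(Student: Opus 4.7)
The plan is to view the orbit $Gx$ as a $G$-set identified with $G/G_x$ and to decompose it under the action of $H$. First I would observe that for any $y=gx\in Gx$ the stabilizer satisfies $G_y=gG_xg^{-1}$, so $|G_y|=|G_x|$ is finite and $H_y=H\cap G_y$ is a subgroup of a finite group; hence the ratio $|G_x|/|H_y|=[G_y:H_y]$ is a well-defined positive integer. The identity to establish is therefore
\[
\sum_{y\in H\backslash Gx}[G_y:H_y]=[G:H].
\]

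The main step will be to identify $H$-orbits on $Gx$ with double cosets in $H\backslash G/G_x$. I would consider the surjection $\phi\colon G\to Gx$, $g\mapsto gx$, whose fibers are right $G_x$-cosets. Since $\phi$ is $H$-equivariant for the left multiplication action of $H$ on $G$, the preimage of an $H$-orbit $Hy$, with $y=g_yx$, is precisely the double coset $Hg_yG_x$, giving a disjoint decomposition
\[
G \;=\; \bigsqcup_{y\in H\backslash Gx} H g_y G_x.
\]

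Next I would count right $H$-cosets on both sides. The left-hand side contributes $[G:H]$ directly. For each double coset on the right, the map $G_x\to H\backslash Hg_yG_x$ sending $g$ to $Hg_yg$ is surjective, and $Hg_yg=Hg_yg'$ if and only if $g'g^{-1}\in g_y^{-1}Hg_y\cap G_x$, a subgroup of cardinality $|H\cap g_yG_xg_y^{-1}|=|H_y|$. Therefore $Hg_yG_x$ contributes $|G_x|/|H_y|$ right $H$-cosets, and summing over $y$ yields the desired identity.

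The only delicate point is that $G$ and $H$ may both be infinite, so the naive argument $|HgK|=|H||K|/|H\cap gKg^{-1}|$ and its consequence $[G:H]=|G|/|H|$ are unavailable in their literal form. This is handled by the assumption that $|G_x|<\infty$, which makes each fiber of the map $G_x\to H\backslash Hg_yG_x$ finite and keeps the argument purely combinatorial in terms of indices. Beyond this bookkeeping I do not anticipate any substantive obstacle: once the double coset decomposition is in place, the rest is a direct application of orbit–stabilizer within each piece.
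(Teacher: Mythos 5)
Your proof is correct and takes essentially the same route as the paper's: both identify the $H$-orbits in $Gx$ with the double cosets in $H\backslash G/G_x$ (equivalently, the $G_x$-orbits on $H\backslash G$), use the conjugation $H_y=g(g^{-1}Hg\cap G_x)g^{-1}$ to see that each piece contributes $|G_x|/|H_y|$ right $H$-cosets, and sum to get $[G:H]$. No gaps.
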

\begin{proof}
Consider the canonical bijections
$H\backslash Gx\simeq H\backslash(G/G_x)\simeq(H\backslash G)/G_x$.
If $y=gx, g\in G$, then
we have $|H_y|=|(g^{-1}Hg\cap G_x)|$
because $H_y=H\cap gG_xg^{-1}=g(g^{-1}Hg\cap G_x)g^{-1}$.
Since $g^{-1}Hg\cap G_x$ is the group of stabilizers
of $Hg\in H\backslash G$ in $G_x$, 
this implies that $|G_x|/|H_y|$ is equal to the cardinality of
the $G_x$-orbit of $Hg$ in $H\backslash G$.
Hence to sum up all the representative in the left hand side
is nothing but counting all the elements of the quotient set
$H\backslash G$ exactly one time for each.
\end{proof}

\begin{rem}
The formula in Proposition \ref{prop:partialzeta} (3)
indicates an advantage of using the induction.
The formula in (4) says that $\xi(X,\Gamma)$ is essentially
determined by $X$. In this sense we also say $\xi(X,\Gamma)$
as a partial zeta function for $X$, without referring to $\Gamma$.
\end{rem}

We consider partial zeta functions for
the quotient classes of $L_1$ by $2 L_1$.
Each class $(p,q,r,s)+2L_1$ is invariant under the action of $\Gamma(2)$.
\begin{defn}
For $p,q,r,s\in\{0,1\}$, we put
$\xi_{pqrs}:=\xi[(p,q,r,s)+2L_1,\Gamma(2)]$.
\end{defn}
If necessary, we also regard $p,q,r,s$ as elements of $\Z/2\Z$.
It is easy to see that
the number of $\Gamma(1)$-orbits of $L_1/2L_1$ is six.
By Proposition \ref{prop:partialzeta} (2),
this implies that there are six different
partial zeta functions $\xi_{pqrs}$.
More precisely, we can take six partial zeta functions
$\xi_{0000}, \xi_{0001}, \xi_{0010}, \xi_{0110}, \xi_{0111}, \xi_{1011}$
as representatives, and others are given by
\begin{align*}
\xi_{0001}=\xi_{1000}=\xi_{1111},
&&&
\xi_{0010}=\xi_{0100}=\xi_{0011}=\xi_{1100}=\xi_{0101}=\xi_{1010},\\
\xi_{0111}=\xi_{1110}=\xi_{1001},
&&&
\xi_{1011}=\xi_{1101}.
\end{align*}

The relations between
$\xi_i$'s and $\xi_{pqrs}$'s
are given as follows.

\begin{prop}\label{prop:mod2zeta}
\begin{enumerate}
\item
We have
\begin{gather*}
6\xi_1=\xi_{0000}+3\xi_{0001}+6\xi_{0010}+\xi_{0110}+3\xi_{0111}+2\xi_{1011},\\
6\xi_2=\xi_{0000}+3\xi_{0111},
\quad
6\xi_3=\xi_{0000}+\xi_{0110}+2\xi_{1011},\\
6\xi_4=\xi_{0000}+3\xi_{0001}+\xi_{0110}+3\xi_{0111},
\quad
6\xi_5=\xi_{0000}+\xi_{0110},
\quad
6\cdot 2^{-4s}\xi_1=\xi_{0000}.
\end{gather*}
\item
We have
\begin{gather*}
\xi_{0000}=6\cdot2^{-4s}\xi_1,
\quad
\xi_{0001}=2(\xi_4-\xi_2-\xi_5+2^{-4s}\xi_1),
\quad
\xi_{0010}=\xi_1+\xi_5-\xi_3-\xi_4,\\
\xi_{0110}=6(\xi_5-2^{-4s}\xi_1),
\quad
\xi_{0111}=2(\xi_2-2^{-4s}\xi_1),
\quad
\xi_{1011}=3(\xi_3-\xi_5).
\end{gather*}
\end{enumerate}
\end{prop}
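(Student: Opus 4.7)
The plan is to reduce everything to a finite computation on $L_1/2L_1$. Since $[\Gamma(1):\Gamma(2)]=6$, Proposition~\ref{prop:partialzeta}(4) gives $6\xi_i = \xi[L_i,\Gamma(2)]$ for each $1\le i\le 5$. For each $i$, I would decompose $L_i$ as the disjoint union of those cosets $(p,q,r,s)+2L_1$ it contains (these are automatically $\Gamma(2)$-invariant) and apply additivity (Proposition~\ref{prop:partialzeta}(1)) to write $\xi[L_i,\Gamma(2)]$ as the corresponding sum of $\xi_{pqrs}$.

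Next I would organize the sixteen cosets into $\Gamma(1)$-orbits. Because $\Gamma(2)$ acts trivially on $L_1/2L_1\cong \mathbb{F}_2^4$, this action factors through $\Gamma(1)/\Gamma(2)\cong \mathrm{SL}_2(\mathbb{F}_2)\cong S_3$, so it suffices to compute the mod-$2$ images of $\tau$ and $\sigma$ from Proposition~\ref{prop:FPE} and trace orbits. A direct check yields six orbits with representatives $0000, 0001, 0010, 0110, 0111, 1011$ and sizes $1, 3, 6, 1, 3, 2$. By Proposition~\ref{prop:partialzeta}(2) applied to $g\in\Gamma(1)$ (so $\det g=1$ and $g\Gamma(2)g^{-1}=\Gamma(2)$), cosets in the same orbit give equal $\xi_{pqrs}$, matching the coincidences listed in the statement. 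The coset $0000$ is handled separately: the bijection $L_1\to 2L_1$, $y\mapsto 2y$, together with $|P(2y)|=2^4|P(y)|$ and parts (2), (4) of Proposition~\ref{prop:partialzeta}, yields $\xi_{0000}=6\cdot 2^{-4s}\xi_1$.

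Part (1) then amounts to bookkeeping: for each $L_i$, select the admissible cosets via the defining congruences and distribute them among the six orbits. For instance $L_4$ consists of the eight cosets with $q+r\equiv 0\pmod 2$, which split as $1+1+3+3$ over the orbits of $0000, 0110, 0001, 0111$, giving the stated $6\xi_4$; the formula for $6\xi_1$ is just the full partition $16=1+3+6+1+3+2$. Part (2) follows by inverting the resulting $6\times 6$ linear system, which has a convenient triangular-ish structure: $\xi_{0000}$ is isolated, then $\xi_{0110}$ from $6\xi_5$, $\xi_{0111}$ from $6\xi_2$, $\xi_{1011}$ from $6\xi_3$, and finally $\xi_{0001}, \xi_{0010}$ from $6\xi_4$ and $6\xi_1$. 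The only real obstacle is combinatorial: careful orbit enumeration on $\mathbb{F}_2^4$ and verification of which cosets actually lie in each $L_i$. It is routine but requires patience.
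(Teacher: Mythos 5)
Your proposal is correct and follows essentially the same route as the paper: apply Proposition~\ref{prop:partialzeta}(4) with $[\Gamma(1):\Gamma(2)]=6$, decompose each $L_i$ into its cosets modulo $2L_1$, identify the six $\Gamma(1)$-orbits on $L_1/2L_1$ (with the scaling argument $y\mapsto 2y$ giving $\xi_{0000}=6\cdot 2^{-4s}\xi_1$), and invert the resulting linear system for part (2). The orbit sizes, the coset counts for each $L_i$, and the triangular inversion you describe all check out against the paper's (briefer) argument.
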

\begin{proof}Since $[\Gamma(1),\Gamma(2)]=6$,
$6\xi_i
=[\Gamma(1),\Gamma(2)]\cdot\xi[L_i,\Gamma(1)]
=\xi[L_i,\Gamma(2)]$. Hence by dividing $L_i$
into the disjoint union of quotient classes modulo $2L_1$, we have (1).
For example,
\[
6\xi_3
=\sum_{\substack{p,q,r,s\in\Z/2\Z\\p+q+r=q+r+s=0}}\xi_{pqrs}
=\xi_{0000}+\xi_{0110}+\xi_{1011}+\xi_{1101}
=\xi_{0000}+\xi_{0110}+2\xi_{1011}.
\]
The formulas in (2) are easily obtained from (1).
\end{proof}

Now we will prove the following formulas.
The authors call the following relations as kaleidoscopic relations
in a joke but for meaning their sophisticated symmetry.
\begin{thm}\label{thm:KSR}
\begin{align*}
\xi_1^{\equiv4(32)}
	&=3\cdot2^{-2s}(\xi_5-2^{-4s}\xi_1),\\
(\xi_1^\vee)^{\cong-20(32)}
	&=3\cdot2^{-2s}(\xi_4^\vee-2^{-4s}\xi_1^\vee),\\
\xi_1^{\equiv20(32)}
	&=	(\xi_4-\xi_2-\xi_5+2^{1-4s}\xi_1)
		+2^{-4s}(\xi_1-\xi_4-2\xi_3+4\xi_5)
\\&\qquad	-2^{-2s}(\xi_1-\xi_3-2\xi_2+5\cdot2^{-4s}\xi_1),\\
(\xi_1^\vee)^{\cong-4(32)}
	&=	(\xi_5^\vee-\xi_3^\vee-\xi_4^\vee+2^{1-4s}\xi_1^\vee)
		+2^{-4s}(\xi_1^\vee-\xi_5^\vee-2\xi_2^\vee+4\xi_4^\vee)
\\&\qquad	-2^{-2s}(\xi_1^\vee-\xi_2^\vee-2\xi_3^\vee+5\cdot2^{-4s}\xi_1^\vee).
\end{align*}
\end{thm}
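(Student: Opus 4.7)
The plan is to reduce each of the four partial zeta functions on the left of Theorem \ref{thm:KSR} to a combination of the mod-$2$ partial zeta functions $\xi_{pqrs}$ (or their $L_1^\vee$-analogues), and then apply Proposition \ref{prop:mod2zeta}. The first step combines Proposition \ref{prop:FPE} with Proposition \ref{prop:partialzeta}(3) to obtain
\[
\xi_1^{\equiv 4(32)} = \xi[X_1, \Gamma_0(2)], \qquad \xi_1^{\equiv 20(32)} = \xi[X_2, \Gamma_0(2)] + \xi[X_3, \Gamma_0(2)],
\]
and the analogues for the dual lattice. The central technical tool is the rescaling $g = \mathrm{diag}(1, 1/2) \in G_\Q$, which acts on $V_\Q$ by $(a,b,c,d) \mapsto (2a, b, c/2, d/4)$ and whose conjugation carries $\Gamma_0(2)$ to $\Gamma^0(2)$. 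Combined via Proposition \ref{prop:partialzeta}(2) and (4) with the index-$2$ inclusion $\Gamma(2) \subset \Gamma^0(2)$, this identifies the partial zeta over $\Gamma_0(2)$ of any subset $X$ whose image $gX$ is an entire mod-$2$ class of $L_1$ with $2^{-2s-1}\xi_{pqrs}$.

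For $X_1$ and $X_2$ this one step suffices: one checks $gX_1 = (0,1,1,0) + 2L_1$ and $gX_2 = (0,1,1,1) + 2L_1$, giving $\xi[X_1, \Gamma_0(2)] = 2^{-2s-1}\xi_{0110}$ and $\xi[X_2, \Gamma_0(2)] = 2^{-2s-1}\xi_{0111}$, which through Proposition \ref{prop:mod2zeta}(2) produce the first formula and the leading contribution to the third. The hard part will be $X_3 = \{a \in \OO,\, b, c \in \EE,\, d \in 2\OO\}$, because the mod-$4$ condition $d \in 2\OO$ cannot be converted into a mod-$2$ condition on $L_1$ by any diagonal rescaling. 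I split the parent mod-$2$ class
\[
(1,0,0,0) + 2L_1 = X_3 \sqcup X_3^{\mathrm{alt}}, \qquad X_3^{\mathrm{alt}} := \{a \in \OO,\, b, c \in \EE,\, d \in 4\Z\};
\]
a short parity check on the generators shows that $(1,0,0,0) + 2L_1$ is $\Gamma_0(2)$-invariant, so $X_3^{\mathrm{alt}}$ inherits the invariance as the complement of $X_3$, and the task reduces to computing $\xi[X_3^{\mathrm{alt}}, \Gamma_0(2)]$. For this I employ a two-step reduction: applying $g$ maps $X_3^{\mathrm{alt}}$ onto $\{A \in 2\OO,\, B \in \EE,\, C, D \in \Z\} = \{A, B \in \EE\} \setminus \{A \in 4\Z, B \in \EE\}$. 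The minuend is a union of four mod-$2$ classes of $L_1$ which under $\Gamma^0(2)$ split into three orbits (the non-trivial coset of $\Gamma^0(2)/\Gamma(2)$ sends $(c, d) \bmod 2$ to $(c, c + d)$ when $(a, b) \equiv (0, 0)$); the subtrahend is handled by a second rescaling $g' = \mathrm{diag}(1/2, 1)$, satisfying $g'\Gamma^0(2)g'^{-1} = \Gamma_0(2)$, which maps it isomorphically onto $\{d \in \EE\} \subset L_1$. Tabulating the $\Gamma_0(2)$-orbits of the eight mod-$2$ classes with $d$ even via the coset representative $T$ and applying Proposition \ref{prop:mod2zeta} yields $\xi[\{d \in \EE\}, \Gamma_0(2)] = 2\xi_1 - 2\xi_3 - \xi_4 + 4\xi_5$. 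Assembling all contributions and using Proposition \ref{prop:mod2zeta}(2) produces the third formula.

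The second and fourth formulas follow by the strictly parallel argument applied to $X_1^\vee$ and $X_2^\vee \sqcup X_3^\vee$: under $g$ one verifies that $gX_1^\vee = (0,3,3,0) + 2L_1^\vee$ and $gX_2^\vee = (0,3,3,1) + 2L_1^\vee$ are entire mod-$2$ classes of $L_1^\vee$, and the delicate case $X_3^\vee = X_3 \cap L_1^\vee$ is handled by the identical two-step rescaling after splitting its parent class $(1,0,0,0) + 2L_1^\vee = X_3^\vee \sqcup X_3^{\vee,\mathrm{alt}}$. The role of Proposition \ref{prop:mod2zeta} is played by its evident $L_1^\vee$-analogue, obtained by the same coset-counting argument.
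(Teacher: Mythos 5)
Your proposal is correct and follows essentially the same route as the paper: reduce to $\xi[X_i,\Gamma_0(2)]$ via Proposition \ref{prop:FPE} and Proposition \ref{prop:partialzeta}(3), rescale by $g=\mathrm{diag}(1,1/2)$ to land in mod-$2$ classes, and finish with Proposition \ref{prop:mod2zeta}; your intermediate value $\xi[\{d\in\EE\},\Gamma_0(2)]=2\xi_1-2\xi_3-\xi_4+4\xi_5$ agrees with the paper's $2^{-1}\sum_{p,q,r}\xi_{pqr0}$. The only (cosmetic) deviation is in the $X_3$ term, where you rescale first and then split off $(4\Z,\EE,\Z,\Z)$ using $g'=\mathrm{diag}(1/2,1)$, whereas the paper splits first and handles the small piece by the scalar $2I$ --- since $g'g=\tfrac12 I$, these are the same computation.
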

\begin{proof}
For subsets $A,B,C,D$ of $\Z$, we write
$(A,B,C,D)=\{(a,b,c,d)\in\Z^4\mid a\in A,b\in B,c\in C,d\in D\}$.
Let $g=\begin{pmatrix}1&0\\0&1/2\end{pmatrix}$.
Then by Proposition \ref{prop:partialzeta},
\begin{align*}
\xi[(A,B,2C,4D),\Gamma_0(2)]
&=(\det g)^{2s}\xi[g(A,B,2C,4D),g\Gamma_0(2)g^{-1}]\\
&=2^{-2s}\xi[(2A,B,C,D),\Gamma^0(2)].
\end{align*}
Hence by Propositions \ref{prop:FPE},
\ref{prop:partialzeta} and \ref{prop:mod2zeta},
\begin{align*}
\xi_1^{\equiv4(32)}
&=\xi[L_1^{\equiv4(32)},\Gamma]
=\xi[\Gamma\times_{\Gamma_0(2)}X_1,\Gamma]
=\xi[X_1,\Gamma_0(2)]\\
&=\xi[(\Z,2\Z+1,4\Z+2,8\Z),\Gamma_0(2)]
=2^{-2s}\xi[(2\Z,2\Z+1,2\Z+1,2\Z),\Gamma^0(2)]\\
&=2^{-2s}[\Gamma^0(2),\Gamma(2)]^{-1}\cdot\xi_{0110}
=3\cdot2^{-2s}(\xi_5-2^{-4s}\xi_1)
\end{align*}
and we have the first formula. Similarly,
the third formula follows from
\[
\xi_1^{\equiv20(32)}
=\xi[\Gamma\times_{\Gamma_0(2)}(X_2\sqcup X_3),\Gamma]
=\xi[X_2,\Gamma_0(2)]+\xi[X_3,\Gamma_0(2)]
\]
and
\begin{align*}
\xi[X_2,\Gamma_0(2)]
&=\xi[(\Z,2\Z+1,4\Z+2,8\Z+4),\Gamma_0(2)]\\
&=2^{-2s}\xi[(2\Z,2\Z+1,2\Z+1,2\Z+1),\Gamma^0(2)]\\
&=2^{-2s}[\Gamma^0(2):\Gamma(2)]^{-1}\cdot\xi_{0111}
=2^{-2s}(\xi_2-2^{-4s}\xi_1),\\
\xi[X_3,\Gamma_0(2)]
&=\xi[(2\Z+1,2\Z,2\Z,4\Z+2),{\Gamma_0(2)}]\\
&=\xi[(2\Z+1,2\Z,2\Z,2\Z),{\Gamma_0(2)}]
  -\xi[(2\Z+1,2\Z,2\Z,4\Z),{\Gamma_0(2)}]\\
&=2^{-1}\xi_{1000}
  -\xi[(\Z,2\Z,2\Z,4\Z),{\Gamma_0(2)}]
  +\xi[(2\Z,2\Z,2\Z,4\Z),{\Gamma_0(2)}]\\
&=2^{-1}\xi_{1000}
  -{2^{-2s}}\xi[(2\Z,2\Z,\Z,\Z),\Gamma^0(2)]
  +2^{-4s}\xi[(\Z,\Z,\Z,2\Z),{\Gamma_0(2)}]\\
&=2^{-1}\xi_{1000}
  -{2^{-1-2s}}{\textstyle\sum_{r,s=0,1}\xi_{00rs}}
  +2^{-1-4s}{\textstyle\sum_{p,q,r=0,1}\xi_{pqr0}}\\
&=2^{-1}\xi_{0001}
  -{2^{-1-2s}}(\xi_{0000}+\xi_{0001}+2\xi_{0010})\\
&\qquad  +2^{-1-4s}(\xi_{0000}+\xi_{0001}+4\xi_{0010}+\xi_{0110}+\xi_{0111})\\
&=(\xi_4-\xi_2-\xi_5+2^{-4s}\xi_1)
  -2^{-2s}(\xi_1-\xi_2-\xi_3+2^{2-4s}\xi_1)\\
&\qquad  +2^{-4s}(2\xi_1-2\xi_3-\xi_4+4\xi_5).
\end{align*}
By considering the intersection with $L_1^\vee$ of each subset in $L_1$,
the rest two formulas are proved similarly.
\end{proof}

\begin{rem}
In the previous paper \cite{sty}, we proved
$\xi_1^{\equiv5(8)}=\xi_2-2^{-4s}\xi_1$ and
$\xi_1^{\equiv1(8)}=\xi_3-2^{-4s}\xi_1$.
Hence with Theorem \ref{thm:KSR}, we have
\begin{align*}
\xi_2&=\xi_1^{\equiv5(8)}+2^{-4s}\xi_1,\\
\xi_3&=\xi_1^{\equiv1(8)}+2^{-4s}\xi_1,\\
(1-2^{-4s})\xi_4
&=\xi_1^{\equiv20(32)}+2^{-2s}(1-2^{-2s})(1+2^{1-4s})\xi_1\\
&\qquad+ (1-2^{1-2s})
	\left(
		\xi_1^{\equiv5(8)}-2^{-2s}\xi_1^{\equiv1(8)}
		+3^{-1}(1+2^{1-2s})2^{2s}\xi_1^{\equiv4(32)}
	\right),
\\
\xi_5&=3^{-1}2^{2s}\xi_1^{\equiv4(32)}+2^{-4s}\xi_1.
\end{align*}
Thus the coefficients of Dirichlet series
$\xi_2,\xi_3,\xi_4,\xi_5$ are expressed in terms of
those of $\xi_1$.
This is also valid for zeta functions for dual lattices.
\end{rem}

We are now ready to prove Theorem \ref{thm:SNR_ty}.
\begin{proof}[Proof of Theorem \ref{thm:SNR_ty}]
We first note that Nakagawa's formula $\xi_1^\vee=\xi_1A$
of Theorem \ref{thm:SNR_nakagawa}
implies $(\xi_1^\vee)^{\cong -l(N)}=A\xi_1^{\equiv l(N)}$.
The reason for the switch from $l\mod N$ to $-l \mod N$
is because $A$ replaces forms of positive discriminants
by forms of negative discriminants and vice-versa.
Recall that we have put
\begin{align*}
&\theta:=	\xi_1-\xi_4-2\xi_3+4\xi_5,
&&
\eta:=	2^{2s}\left(\xi_4-\xi_2-\xi_5+2^{1-4s}\xi_1\right),\\
&\theta^\vee:=	2^{2s}\left(\xi_5^\vee-\xi_3^\vee-\xi_4^\vee+2^{1-4s}\xi_1^\vee\right),
&&
\eta^\vee:=	\xi_1^\vee-\xi_5^\vee-2\xi_2^\vee+4\xi_4^\vee,
\end{align*}
and our goal is to get $\theta^\vee=\theta A$ and
$\eta^\vee=A\eta$.
By Theorems \ref{thm:SNR_nakagawa}, \ref{thm:SNR_sty}
and \ref{thm:KSR},
\begin{align*}
2^{2s}A\xi_1^{\equiv20(32)}
&=A\eta+2^{-2s}A\theta-A(\xi_1-\xi_3-2\xi_2+5\cdot2^{-4s}\xi_1)\\
&=A\eta+2^{-2s}A\theta-(\xi_1^\vee-\xi_3^\vee-2\xi_2^\vee+5\cdot2^{-4s}\xi_1^\vee).
\end{align*}
Therefore since $A\xi_1^{\equiv20(32)}=(\xi_1^\vee)^{\cong-20(32)}$,
we have
\[
A\eta+2^{-2s}A\theta
=\xi_1^\vee-2\xi_2^\vee-\xi_3^\vee+3\xi_4^\vee+2^{1-4s}\xi_1^\vee
=\eta^\vee+2^{-2s}\theta^\vee.
\]
Similarly, from
$A\xi_1^{\equiv4(32)}=(\xi_1^\vee)^{\cong-4(32)}$, we have
\[
\theta^\vee+2^{-2s}\eta^\vee
=A(\xi_1-\xi_2-2\xi_3+3\xi_5+2^{1-4x}\xi_1)
=A\theta+2^{-2s}A\eta.
\]
These two equalities are equivalent to 
$\theta^\vee=A\theta$ and 
$\eta^\vee=A\eta$.
\end{proof}

\section{Analytic properties of the zeta functions}
\label{sec:fe_SDF}

Now we will prove Theorems \ref{thm:fe_SDF} and \ref{thm:residue}.

\begin{proof}[Proof of Theorems \ref{thm:fe_SDF} and \ref{thm:residue}]
By \cite[Theorem 4.2]{sty}, we have the functional equation
\[
\xi_i(1-s)=2^{2a_is}{[L_1:L_i]}^{-1}M(s)\xi^\vee_i(s)
\]
where
\[
M(s):=
\frac{3^{3s-2}}{2\pi^{4s}}
\Gamma(s)^2\Gamma(s-\frac16)\Gamma(s+\frac16)
\begin{pmatrix}
\sin 2\pi s&\sin \pi s\\
3\sin \pi s&\sin2\pi s\\
\end{pmatrix}
\]
and $a_1=0, a_2=a_3=a_4=a_5=2$.
Hence
\begin{align*}
\theta(1-s)
&=\xi_1(1-s)-2\xi_3(1-s)-\xi_4(1-s)+4\xi_5(1-s)\\
&=2^{4s-1}M(s)\left(2^{1-4s}\xi_1^\vee(s)-\xi_3^\vee(s)-\xi_4^\vee(s)+\xi_5^\vee(s)\right)\\
&=2^{2s-1}M(s)\theta^\vee(s)\\
&=2^{2s-1}M(s)A\theta(s).
\end{align*}
Note that the last equality follows from Theorem \ref{thm:SNR_ty}.
Similarly, we have
\[
\eta(1-s)=2^{2s-1}M(s)A\eta(s).
\]
We put
\[
\Delta(s)=\begin{pmatrix}\Delta_+(s)&0\\0&\Delta_-(s)\\\end{pmatrix},
\quad
T=\begin{pmatrix}\sqrt3 &1\\ \sqrt3& -1\\\end{pmatrix}.
\]
Then, since $\Delta(1-s)TM(s)A=\Delta(s)T$ (this symmetrization
of $M(s)$ is due to Datskovsky and Wright \cite{dawra}),
we have
\begin{align*}
2^{1-s}\Delta(1-s)T\theta(1-s)&=2^s\Delta(s)T\theta(s),\\
2^{1-s}\Delta(1-s)T\eta(1-s)&=2^s\Delta(s)T\eta(s),
\end{align*}
and Theorem \ref{thm:fe_SDF} is proved.
Finally, Theorem 1.7 immidiately follows from the
residue formulas of $\xi_{i,j}(s)$ given in \cite[Theorem 4.2]{sty}.
We note that
\[
\frac{\sqrt[3]{2\pi}\Gamma(1/3)\zeta(2/3)}{3\Gamma(2/3)}
=\frac{\Gamma(1/3)^3\zeta(1/3)}{2\pi}.
\]
Interestingly, the residues at $s=5/6$ of
$\xi_{1,-}(s)$, $\theta_-(s)$, $\xi_{2,-}(s)$, $\xi_{3,-}(s)$
and $\eta_-(s)$ vanish.
This finishes the proof.
\end{proof}

\bigskip

\noindent
{\bf Acknowledgements.}
The authors would like to express their gratitude to
Tomoyoshi Ibukiyama and Don Zagier
for valuable discussion and encouragement.
The authors also thank Manjul Bhargava
for useful communications.
The authors are grateful to Noriyuki Abe for
providing a well constructed C++ program to produce experimental data.
The first author thanks the Max-Planck-Institut f\"{u}r Mathematik
in Bonn for its wonderful working condition and hospitality.
The first author is supported by JSPS Grant-in-Aid No.\!\! 20540033.
The second author is supported by JSPS Grant-in-Aid No.\!\! 20740018,
No.\!\!  20674001 and by JSPS Postdoctoral Fellowships for Research Abroad.


\begin{thebibliography}{9}

\bibitem{bha}
M.~Bhargava.
\newblock Higher composition laws {I}: {A} new view on {G}auss composition, and
  quadratic generalizations.
\newblock {\em Ann. Math.}, 159:217--250, 2004,
\newblock {II}: On cubic analogues of Gauss
  composition.
\newblock {\em Ann. Math.}, 159:865--886, 2004,
\newblock {III}: The parametrization of quartic rings.
\newblock {\em Ann. Math.}, 159:1329--1360, 2004,
\newblock {IV}: The parametrization of quintic rings.
\newblock {\em Ann. Math.}, 167:53--94, 2008.

\bibitem{dawra}
B.~Datskovsky and D.J. Wright.
\newblock The adelic zeta function associated with the space of binary cubic
  forms {II}: Local theory.
\newblock {\em J. Reine Angew. Math.}, 367:27--75, 1986.

\bibitem{nakagawa}
J.~Nakagawa.
\newblock On the relations among the class numbers of binary cubic forms.
\newblock {\em Invent. Math.}, 134:101--138, 1998.




\bibitem{ohno}
Y.~Ohno.
\newblock A conjecture on coincidence among the zeta functions associated with
  the space of binary cubic forms.
\newblock {\em Amer. J. Math.}, 119:1083--1094, 1997.

\bibitem{sty}
Y.~Ohno, T.~Taniguchi, and S.~Wakatsuki.
\newblock Relations among {D}irichlet series whose coefficients are class
  numbers of binary cubic forms.
\newblock {\em Amer. J. Math.}, 131:1525--1541, 2009.


\bibitem{sash}
M.~Sato and T.~Shintani.
\newblock On zeta functions associated with prehomogeneous vector spaces.
\newblock {\em Ann. of Math.}, 100:131--170, 1974.

\bibitem{shintania}
T.~Shintani.
\newblock On {D}irichlet series whose coefficients are class-numbers of
  integral binary cubic forms.
\newblock {\em J. Math. Soc. Japan}, 24:132--188, 1972.

\end{thebibliography}
\end{document}